\numberwithin{equation}{section}
\newenvironment{proof2.1}{\medskip\noindent{\bf Proof of the Theorem 2.1:}\enspace}{\hfill \qed \newline \medskip}
\newenvironment{proof2.2}{\medskip\noindent{\bf Proof of the Theorem 2.2:}\enspace}{\hfill \qed \newline \medskip}
\newtheorem{theorem}{\color{black}\indent Theorem}[section]
\newtheorem{lemma}{\color{black}\indent Lemma}[section]
\newtheorem{remark}{\color{black}\indent Remark}[section]
\begin{document}
\title{Mountain pass solution to the Br\'{e}zis-Nirenberg problem with logarithmic perturbation}
\author{Qian Zhang\qquad Yuzhu Han$^{\dag}$}

\affil{School of Mathematics, Jilin University,
 Changchun 130012, P.R. China}
\renewcommand*{\Affilfont}{\small\it}
\date{} \maketitle
\vspace{-20pt}

\footnotetext{\hspace{-1.9mm}$^\dag$Corresponding author.\\
Email addresses: yzhan@jlu.edu.cn(Y. Han).

\thanks{
$^*$Supported by the National Key Research and Development Program of China
(grant no.2020YFA0714101).}}

{\bf Abstract}
In this paper we give a positive answer to the conjecture raised by Hajaiej et al. (J. Geom. Anal., 2024,
34(6): No. 182, 44 pp) on the existence of a mountain pass solution at positive energy level to the Br\'{e}zis-Nirenberg problem with logarithmic perturbation. To be a little more precise, by taking full
advantage of the local minimum solution and some very delicate estimates on the logarithmic term and the
critical term, we prove that the following problem
\begin{eqnarray*}
\begin{cases}
-\Delta u= \lambda u+\mu|u|^2u+\theta u\log u^2, &x\in\Omega,\\
u=0, &x\in\partial\Omega
\end{cases}
\end{eqnarray*}
possesses a positive mountain pass solution at positive energy level, where $\Omega\subset \mathbb{R}^4$
is a bounded domain with smooth boundary $\partial\Omega$, $\lambda\in \mathbb{R}$, $\mu>0$ and $\theta<0$.
A key step in the proof is to control the mountain pass level around the local minimum solution from above
by a proper constant to ensure the local compactness. Moreover, this result is also extended to
three-dimensional and five-dimensional cases.

{\bf Keywords} Br\'{e}zis-Nirenberg problem; critical exponent; logarithmic perturbation;
mountain pass solution.

{\bf AMS Mathematics Subject Classification 2020:} Primary 35J20; Secondary 35J60.

\section{Introduction}

Consider the following Br\'{e}zis-Nirenberg problem with logarithmic perturbation
\begin{eqnarray}\label{P1}
\begin{cases}
-\Delta u= \lambda u+\mu|u|^2u+\theta u\log u^2, &x\in\Omega,\\
u=0, &x\in\partial\Omega,
\end{cases}
\end{eqnarray}
where $\Omega\subset \mathbb{R}^4$ is a bounded domain with smooth boundary
$\partial\Omega$, $\lambda\in \mathbb{R}$, $\mu>0$ and $\theta<0$.
Equation \eqref{P1} is closely related to the following time-dependent nonlinear logarithmic type
Schr\"{o}dinger equation
\begin{eqnarray}\label{P1-1}
\begin{cases}
\iota \partial_t\Psi=\Delta\Psi+\mu|\Psi|^2\Psi+\theta \Psi\log \Psi^2, \ \ x\in\Omega, \ t>0,\\
\Psi=\Psi(x,t)\in \mathbb{C},\\
\Psi(x,t)=0, \ \ x\in\partial\Omega, \ t>0,
\end{cases}
\end{eqnarray}
where $\iota$ is the imaginary unit. Equation \eqref{P1-1} appears in many physical fields, such as
quantum mechanics, quantum optics, nuclear physics, theory of superfluidity and Bose-Einstein condensation,
see e.g., \cite{BB1975,Carles2018,Choi1999,Erdos2010}. Consequently, the research on nonlinear logarithmic
type Schr\"{o}dinger equations  has attracted extensive attention during the past few decades.

In particular, Deng et al. \cite{Dengyinbin} investigated the following Br\'{e}zis-Nirenberg problem with
logarithmic perturbation
\begin{equation}\label{P3}
\begin{cases}
-\Delta{u}=\lambda u+\mu|u|^{2^{*}-2}u+\theta u\log{u^2},&x\in\Omega,\\
u=0,&x\in\partial\Omega,
\end{cases}
\end{equation}
where $\Omega\subset\mathbb{R}^N~(N\geq3)$ is a bounded domain with smooth boundary $\partial\Omega$,
$\lambda,\theta\in\mathbb{R}$, $\mu=1$ and $2^{*}=\frac{2N}{N-2}$ is the critical Sobolev exponent.
For $\theta>0$, they first proved the local compactness of the corresponding energy functional and
then showed that problem \eqref{P3} admits a positive mountain pass solution (which is also a ground
state solution) for all $\lambda\in\mathbb{R}$ when $N\geq4$. When $\theta<0$, whether or not the
corresponding energy functional is still locally compact is unknown. By applying a mountain pass lemma
without compactness condition they proved that problem \eqref{P3} possesses a nontrivial weak solution
when $N=3,4$ and the parameters fulfill certain conditions. Moreover, a nonexistence result was also
obtained when $N\geq3$, $\theta<0$ and
$-\frac{(N-2)\theta}{2}+\frac{(N-2)\theta}{2}\log(\frac{(N-2)\theta}{2})+\lambda-\lambda_1(\Omega)\geq0$.
It is worthy pointing out that due to the lack of compactness, neither the types nor the energy levels
of the solutions obtained in \cite{Dengyinbin} were specified when $\theta<0$.

Later, Hajaiej et al. \cite{Hajaiej,Ha2024} re-considered problem \eqref{P3} with $N\geq4$,
$\mu>0$ and $\theta<0$. By virtue of Ekeland's variational principle and Br\'{e}zis-Lieb's
lemma, they showed the existence of a positive local minimum solution and a positive least energy
solution when $(\lambda,\theta,\mu)\in \Sigma_1\cup\Sigma_2$, where
\begin{align*}
\Sigma_1:&=\left\{(\lambda, \mu, \theta): \lambda\in \left[0, \lambda_1(\Omega)\right), \mu>0, \theta<0,
 \left(\frac{\lambda_{1}(\Omega)-\lambda}{\lambda_{1}(\Omega)}\right)^{\frac{N}{2}}\mu^{-\frac{N-2}{2}}S^{\frac{N}{2}}
 +\frac{N}{2}\theta|\Omega|>0\right\},\\
\Sigma_2:&=\left\{(\lambda, \mu, \theta): \lambda\in \mathbb{R}, \mu>0, \theta<0,
 \mu^{-\frac{N-2}{2}}S^{\frac{N}{2}}+\frac{N}{2}\theta e^{-\frac{\lambda}{\theta}}|\Omega|>0\right\},
\end{align*}
and $\lambda_{1}(\Omega)$ is the first eigenvalue of $-\Delta$ in $\Omega$ with null Dirichlet boundary
condition. Comparing the results in \cite{Hajaiej,Ha2024} with the ones in \cite{Dengyinbin} one sees
that the authors of \cite{Hajaiej,Ha2024} not only weakened part of the conditions for problem \eqref{P3}
with $\theta<0$ to admit weak solutions, but also specified the types and energy levels of the solutions.
Moreover, since the corresponding energy functional possesses a mountain pass structure around the local
minimum solution, the authors of \cite{Hajaiej} conjectured that besides the local minimum solution,
equation \eqref{P1} possesses a mountain pass solution. Actually they proposed the following conjecture:

{\bf Conjecture:} Equation \eqref{P1} possesses a positive mountain pass solution at level $c_{M}>0$.

The aim of this paper is to give a positive answer to this conjecture. Let us explain our strategies
in a detailed way. Since the equation involves a critical term, the corresponding energy functional
lacks global compactness, and what we can expect is only the local compactness. Consequently, to show the
existence of a mountain pass solution, we need to control the mountain pass level from above by a
proper constant to ensure the local compactness. This is the key step of the proof, and is done
by taking full advantage of the local minimum solution and some very delicate estimates on the logarithmic
term and the critical term acting on the truncated Aubin-Talenti bubbles. Set
\begin{align*}
A_1:&=\left\{(\lambda, \mu, \theta): \lambda\in \left[0, \lambda_1(\Omega)\right), \mu>0, \theta<0,
 \frac{(\lambda_{1}(\Omega)-\lambda)^2}{\lambda_{1}^2(\Omega)\mu}S^2+e\theta|\Omega|\geq0\right\},\\
A_2:&=\left\{(\lambda, \mu, \theta): \lambda\in \mathbb{R}, \mu>0, \theta<0,
 \mu^{-1}S^2+\theta e^{1-\frac{\lambda}{\theta}}|\Omega|\geq0\right\}.
\end{align*}
Then the main result of the paper can be stated as follows.

\begin{theorem}\label{th1.1}
Assume that $(\lambda,\mu,\theta)\in A_1\cup A_2$.
Then problem \eqref{P1} possesses a positive mountain pass solution at level $c_{M}>0$.
\end{theorem}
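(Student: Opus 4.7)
The plan is to produce a second critical point of the associated energy functional at a value strictly above the local minimum $u_0$ from \cite{Hajaiej, Ha2024} but strictly below the compactness‐loss threshold $I(u_0)+\frac{1}{4\mu}S^2$ dictated by the critical exponent in dimension four. A direct inspection shows that $A_1\subset\Sigma_1$ and $A_2\subset\Sigma_2$ when $N=4$: in $A_1$ the coefficient $e$ strictly exceeds the $2$ which appears in $\Sigma_1$ (and $\theta<0$), and similarly in $A_2$. Thus the existence of a positive local minimum $u_0$ with $I(u_0)\le 0$, together with constants $r>0$ and $\delta>0$ satisfying $I(u)\ge I(u_0)+\delta$ on $\partial B_r(u_0)$, is already at our disposal. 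To complete the mountain pass geometry, I will note that for any fixed $\phi\in H_0^1(\Omega)\setminus\{0\}$ the quartic term $-\frac{\mu}{4}\int u^4$ dominates the subcritical and logarithmic terms as $t\to\infty$, so $I(u_0+t\phi)\to -\infty$. Selecting $v=u_0+T\phi$ with $\|v-u_0\|>r$ and $I(v)<I(u_0)$, define
\begin{equation*}
c_M := \inf_{\gamma\in\Gamma}\max_{s\in[0,1]} I(\gamma(s))\ge I(u_0)+\delta,
\end{equation*}
where $\Gamma$ denotes the class of continuous paths in $H_0^1(\Omega)$ joining $u_0$ to $v$.

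The heart of the argument, and what I expect to be by far the main obstacle, is the strict energy upper bound $c_M < I(u_0)+\frac{1}{4\mu}S^2$. To establish it, I would fix $x_0\in\Omega$ and $\rho>0$ with $B(x_0,2\rho)\subset\Omega$ and $u_0\ge c_0>0$ on $B(x_0,2\rho)$, and introduce the truncated Aubin--Talenti bubbles
\begin{equation*}
U_\epsilon(x)=\eta(x)\,\frac{\sqrt{8}\,\epsilon}{\epsilon^2+|x-x_0|^2},
\end{equation*}
where $\eta$ is a smooth cutoff supported in $B(x_0,2\rho)$ and identically $1$ on $B(x_0,\rho)$. Restricting the mountain pass infimum to the one-parameter family $t\mapsto u_0+tU_\epsilon$, I would expand $h_\epsilon(t):=I(u_0+tU_\epsilon)$, use $I'(u_0)=0$ to kill the first-order terms, and extract that the pure critical part contributes $\frac{1}{4\mu}S^2$, while the cross terms $\mu\int u_0 U_\epsilon^3$, the linear part $-\lambda\int u_0 U_\epsilon$, and, most delicately, the logarithmic contribution $-\theta\int (u_0+tU_\epsilon)^2\log(u_0+tU_\epsilon)^2$ all generate corrections at the dimension-four critical scale $\epsilon^2|\log\epsilon|$. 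The evaluation of the logarithmic contribution requires splitting the integration domain according to whether $U_\epsilon$ or $u_0$ dominates pointwise, together with elementary bounds of type $s^2\log s^2\ge -e^{-1}$ and an optimization in $t$. This should produce precisely the saving $e|\theta||\Omega|$ in regime $A_1$ (where the Poincar\'e inequality driven by $\lambda_1(\Omega)-\lambda$ is still available) and $\theta e^{1-\lambda/\theta}|\Omega|$ in regime $A_2$ (arising from minimizing a constant-mode log functional), matching exactly the conditions defining $A_1$ and $A_2$; the closed inequality ``$\ge 0$'' is absorbed into the strict inequality thanks to the uniformly positive $\epsilon^2|\log\epsilon|$ correction.

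With this upper bound secured, the local Palais--Smale condition below $I(u_0)+\frac{1}{4\mu}S^2$ follows from the classical Br\'ezis--Lieb splitting: for a PS sequence $u_n\rightharpoonup u$, the decomposition $w_n:=u_n-u$ satisfies $I(u_n)=I(u)+\tfrac12\|\nabla w_n\|_2^2 - \tfrac{\mu}{4}\|w_n\|_4^4+o(1)$, where a pointwise Br\'ezis--Lieb-type identity for $s^2\log s^2$, proved by dominated convergence applied to $(a+b)^2\log(a+b)^2-a^2\log a^2-b^2\log b^2$, shows that the logarithmic term passes cleanly to the limit. The energy bound then rules out any non-vanishing bubble and forces $w_n\to 0$ strongly. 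The mountain pass theorem produces a critical point $u^*$ with $I(u^*)=c_M>I(u_0)\ge 0$ wait rather $>I(u_0)$, so $u^*\notin\{0,u_0\}$, and $c_M>0$ after a standard tightening of the path construction (replacing $u_0$ by $0$ in the definition of $\Gamma$ if $I(u_0)\ge 0$, which is automatic in $A_1$). Positivity of $u^*$ follows by carrying the construction out in the positive cone (the non-gradient part of $I$ is invariant under $u\mapsto|u|$) and applying the strong maximum principle on the positivity set of $u^*$, where the singularity of $\log u^2$ is harmless.
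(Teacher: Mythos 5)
There is a genuine gap, and it sits exactly where you predicted the main difficulty would be, but in a different place than you think. First, your compactness threshold is not justified as stated. From the Br\'ezis--Lieb splitting, a nonvanishing bubble forces the weak limit $u$ (a nonnegative critical point) to satisfy $J(u)\le c-\frac{1}{4}\mu^{-1}S^{2}$; to reach a contradiction when $c<J(u_0)+\frac{1}{4}\mu^{-1}S^{2}$ you must know that \emph{no critical point has energy below} $J(u_0)$, i.e.\ that $u_0$ is a least energy solution, $c_K=J(u_0)$. This is precisely the paper's Lemma \ref{gss}, and it is the step where the defining inequalities of $A_1$ and $A_2$ (with the constant $e$) are actually used: they force the least energy solution $u_k$ into the ball $B_\rho$ where $u_0$ minimizes. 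Your proposal never establishes $c_K=J(u_0)$ and instead asserts the $(PS)$ condition below $J(u_0)+\frac{1}{4}\mu^{-1}S^{2}$ outright; without the identification the correct threshold is $c_K+\frac{1}{4}\mu^{-1}S^{2}$, which a priori could be smaller than your bound on $c_M$, and the argument does not close.

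Second, the mechanism you describe for the strict upper bound is not the one that works. With $\theta<0$ the logarithmic term enters the energy with the \emph{unfavorable} sign, so it cannot produce a ``saving'' of size $e|\theta||\Omega|$ or $\theta e^{1-\lambda/\theta}|\Omega|$; in the paper those constants play no role in the bubble expansion at all. Moreover the cross term $\mu\int_\Omega u_0\,\psi_\varepsilon^{3}\,\mathrm{d}x$ is not of order $\varepsilon^{2}|\log\varepsilon|$ in dimension four: it is of order $\varepsilon$ (Lemma \ref{estimate3}), and it is exactly this $O(\varepsilon)$ interaction with the positive minimizer, $-\mu t^{3}\int_\Omega u_0\psi_\varepsilon^{3}\,\mathrm{d}x\le -C u_0(0)\varepsilon$, that beats the error terms $O(\varepsilon^{3/2})$ (from $\int\psi_\varepsilon^{5/2}$, coming out of the log estimate), $O(\varepsilon^{2}|\log\varepsilon|)$ and $O(\varepsilon^{2})$, after one also shows the maximizing parameter $t_\varepsilon$ stays in a fixed interval $[C_0,C^0]$ (cf.\ \eqref{ineqt7}); optimizing the pure quadratic--quartic part then gives $\frac{1}{4}\mu^{-1}S^{2}$ and the strict inequality follows for small $\varepsilon$. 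As written, your expansion would leave you trying to absorb constant-size log contributions into an $\varepsilon^{2}|\log\varepsilon|$ correction, which cannot work; the log term must simply be dominated (via $\log t\le \frac{1}{e\delta}t^{\delta}$-type bounds), not exploited. The remaining ingredients of your outline (mountain pass geometry around $u_0$, Br\'ezis--Lieb splitting with convergence of the logarithmic integrals, positivity via the truncated functional and the strong maximum principle) do match the paper's route.
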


This paper is organized as follows. In Section 2 we introduce some notations and present some preliminary results.
The key energy estimate $c_{M}<c_{K}+\frac{1}{4}\mu^{-1}S^2$ is proved in Section 3, and the proof of Theorem \ref{th1.1}
is completed in Section 4. Finally, in Section 5 we show that the existence of a mountain pass solution also holds
when $N=3$ and $N=5$.

\section{Preliminaries}

In this section, we first introduce some notations and then present several preliminary results.
Throughout this paper, we use $\|\cdot\|_p$ to denote the norm of $L^p(\Omega)$ for $1\leq p\leq\infty$
and denote the norm of $H_0^1(\Omega)$ by $\|\cdot\|:=\|\nabla \cdot\|_2$. The dual space of $H_0^1(\Omega)$
is denoted by $H^{-1}(\Omega)$ and the dual pair between $H^{-1}(\Omega)$ and $H_0^1(\Omega)$ is written as
$\langle\cdot,\cdot\rangle$. For each Banach space $B$, we use $\rightarrow$ and $\rightharpoonup$ to denote
the strong convergence and weak convergence in $B$, respectively. The notation $|\Omega|$ means the Lebesgue measure of
$\Omega$ in $\mathbb{R}^N$. For each function $u$, we use $u^+$ and $u^{-}$ to denote the positive and negative
parts of $u$, respectively, i.e., $u^+=\max\{u,0\}$, $u^{-}=-\max\{-u,0\}$. The symbol $O(t)$ means $|\frac{O(t)}{t}|\leq C$
as $t\rightarrow 0$ and $o_n(1)$ is an infinitesimal as $n\rightarrow\infty$. The capital letter $C$ will appear as
a generic positive constant which may vary from line to line. The positive constant $S$ denotes the best embedding
constant from $H_0^1(\Omega)$ to $L^{2^{*}}(\Omega)$, i.e.,
\begin{align}\label{S}
S=\inf\limits_{u\in H_0^1(\Omega)\backslash\{0\}}\dfrac{\| u\|^2}{\|u\|_{2^{*}}^{2}}.
\end{align}

To find positive solutions to problem \eqref{P1}, we define the associated modified energy functional
\begin{eqnarray*}
J(u)=\frac{1}{2}\int_{\Omega}|\nabla u|^2\mathrm{d}x-\frac{\lambda}{2}\int_{\Omega}|u^+|^2\mathrm{d}x
-\frac{\mu}{4}\int_{\Omega}|u^+|^4\mathrm{d}x
-\frac{\theta}{2}\int_{\Omega}(u^+)^2\left(\log(u^+)^2-1\right)\mathrm{d}x,\ u\in H_0^1(\Omega).
\end{eqnarray*}
Then the Fr\'{e}chet derivative of $J(u)$ can be expressed as
\begin{eqnarray*}
\langle J'(u),\phi \rangle=\int_{\Omega}\nabla u\nabla \phi\mathrm{d}x-\lambda\int_{\Omega}u^+ \phi\mathrm{d}x- \mu\int_{\Omega}(u^+)^3\phi\mathrm{d}x
-\theta\int_{\Omega}u^+ \phi \log (u^+)^{2}\mathrm{d}x, \ u,\phi \in H_{0}^{1}(\Omega).
\end{eqnarray*}
It is easy to see that $J(u)$ is well defined in $H_0^1(\Omega)$, and any nonnegative critical point of $J$
corresponds to a solution to problem \eqref{P1}.


To deal with the logarithmic term $u\log u^2$, we need the following basic inequalities, whose proofs can
be found in [\cite{LHG2024}, Lemma 2.3].
\begin{lemma}\label{logarithmic inequality}
$(1)$ For all $t\in (0,1]$, there holds that
\begin{equation}\label{log-1}
|t\log t| \leq\frac{1}{e}.
\end{equation}

$(2)$ For any $\delta>0$, there holds that
\begin{equation}\label{log-3}
\log t\leq \frac{1}{e\delta}{t^\delta}, \qquad \forall\ t>0.
\end{equation}
\end{lemma}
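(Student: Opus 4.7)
The plan is to exploit the mountain pass geometry that $J$ already exhibits around the positive local minimum solution $u_K$ produced in \cite{Hajaiej,Ha2024}, and to recover the compactness lost to the critical Sobolev term by controlling the mountain pass level $c_M$ strictly below the first non-compactness threshold $c_K+\tfrac{1}{4}\mu^{-1}S^2$, where $c_K:=J(u_K)$. Since $A_1\cup A_2\subset\Sigma_1\cup\Sigma_2$, \cite{Ha2024} supplies the local minimum together with constants $\rho,\alpha>0$ for which $J(u)\ge c_K+\alpha$ whenever $\|u-u_K\|=\rho$. The critical term $-\tfrac{\mu}{4}\|u^+\|_4^4$ dominates every other contribution along any nonnegative ray, so $J(u_K+t\varphi)\to-\infty$ as $t\to\infty$ for a fixed $\varphi\ge 0$, $\varphi\not\equiv 0$, and one can choose $e=u_K+T\varphi$ with $\|e-u_K\|>\rho$ and $J(e)<c_K$. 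Setting
$$c_M:=\inf_{\gamma\in\Gamma}\max_{s\in[0,1]}J(\gamma(s)),\qquad \Gamma:=\{\gamma\in C([0,1],H_0^1(\Omega)):\gamma(0)=u_K,\ \gamma(1)=e\},$$
one has $c_M\ge c_K+\alpha$, which is the candidate level whose attainment we must establish.

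The second ingredient is a local Palais-Smale condition: every $(PS)_c$ sequence $\{u_n\}$ with $c<c_K+\tfrac{1}{4}\mu^{-1}S^2$ is precompact. Boundedness in $H_0^1(\Omega)$ follows from the combination $J(u_n)-\tfrac{1}{4}\langle J'(u_n),u_n\rangle$ after absorbing the logarithmic term via Lemma \ref{logarithmic inequality} (the inequality $\log t\le(e\delta)^{-1}t^\delta$ makes it $\varepsilon$-subcritical for every small $\delta>0$). The weak limit $u$ is a critical point, and a Br\'ezis-Lieb decomposition on the critical term together with strong $L^p$-convergence of the subcritical and logarithmic nonlinearities reduces the argument to $\|\nabla w_n\|^2-\mu\|w_n\|_4^4\to 0$ for $w_n:=u_n-u$; Sobolev's inequality \eqref{S} then forces $w_n\to 0$ in $H_0^1(\Omega)$, because any nonzero limit $\ell:=\lim\|\nabla w_n\|^2\ge\mu^{-1}S^2$ would stack residual energy $\tfrac{\ell}{4}\ge\tfrac{1}{4}\mu^{-1}S^2$ on top of $J(u)\ge c_K$, contradicting the level assumption.

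The main obstacle, which the authors flag as the content of Section 3, is the strict inequality $c_M<c_K+\tfrac{1}{4}\mu^{-1}S^2$. I would evaluate the infimum along the explicit path $\gamma_t:=u_K+tU_\varepsilon$, where $U_\varepsilon(x)=\varphi(x)\,C\varepsilon/(\varepsilon^2+|x-x_0|^2)$ is a truncated Aubin-Talenti bubble centered at an interior point $x_0\in\Omega$. Using that $u_K$ solves the Euler-Lagrange equation for \eqref{P1}, the expansion of $J(\gamma_t)-c_K$ decomposes into the leading Aubin-Talenti quantity $\tfrac{t^2}{2}\|\nabla U_\varepsilon\|^2-\tfrac{\mu t^4}{4}\|U_\varepsilon\|_4^4$, whose $t$-maximum is precisely $\tfrac{1}{4}\mu^{-1}S^2$; a positive cross term generated by the quadratic expansion of the critical nonlinearity around $u_K>0$, which in dimension four carries the characteristic factor $\varepsilon^2|\log\varepsilon|$; and the logarithmic contribution $-\tfrac{\theta}{2}\int(u_K+tU_\varepsilon)^2[\log(u_K+tU_\varepsilon)^2-1]$. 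The conditions defining $A_1$ and $A_2$ are calibrated precisely so that, since $\theta<0$, the negative logarithmic contribution beats the positive $\varepsilon^2|\log\varepsilon|$ cross term for $\varepsilon$ small, forcing $\max_t J(\gamma_t)$ strictly below $c_K+\tfrac{1}{4}\mu^{-1}S^2$. Tracking every order of $\varepsilon$ and $|\log\varepsilon|$ in the logarithmic integrals on the bubble is the delicate part, and is where Lemma \ref{logarithmic inequality} is indispensable.

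With the three ingredients assembled, the mountain pass theorem with the local $(PS)$ condition in place of the global one yields a critical point $u\ne u_K$ of $J$ at level $c_M$. Pairing $J'(u)=0$ with $u^-$ gives $\|\nabla u^-\|_2=0$, hence $u\ge 0$, and elliptic regularity together with the strong maximum principle upgrades this to $u>0$ in $\Omega$. Finally, $c_M>0$ follows from the gap $c_M\ge c_K+\alpha$ combined with a direct lower bound for $J$ along admissible paths that keeps the mountain pass level strictly above zero, completing the proof.
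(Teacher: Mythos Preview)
Your proposal does not address the stated lemma at all. The statement you were asked to prove is Lemma~\ref{logarithmic inequality}, a pair of one-variable calculus inequalities: $|t\log t|\le e^{-1}$ on $(0,1]$ and $\log t\le (e\delta)^{-1}t^\delta$ for $t>0$. Its proof is elementary optimization---maximize $f(t)=-t\log t$ on $(0,1]$ to find the maximum $1/e$ at $t=1/e$, and maximize $g(t)=\log t-(e\delta)^{-1}t^\delta$ on $(0,\infty)$ to find the maximum $0$ at $t=e^{1/\delta}$. The paper itself simply cites \cite{LHG2024} for this.

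What you have written instead is an outline of the proof of Theorem~\ref{th1.1}, the main existence result for the mountain pass solution. You even invoke ``Lemma~\ref{logarithmic inequality}'' as a tool, confirming that you are proving a different statement. Beyond the mismatch, your sketch of the energy estimate also misidentifies the decisive term: in the paper the strict inequality $c_M<c_K+\tfrac14\mu^{-1}S^2$ is driven by the cubic cross term $-\mu\beta_\varepsilon^3\int_\Omega u_0|\psi_\varepsilon|^3\,\mathrm{d}x$, which is of order $\varepsilon$ (Lemma~\ref{estimate3}) and dominates the $O(\varepsilon^2|\log\varepsilon|)$ and $O(\varepsilon^{3/2})$ contributions coming from the logarithmic perturbation; the sets $A_1,A_2$ enter earlier, in Lemma~\ref{gss}, to guarantee $c_\rho=c_K$, not in the bubble estimate itself.
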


Assume that $(\lambda, \mu, \theta)\in A_1\cup A_2$.  Since $(A_1\cup A_2)\subset (\Sigma_1\cup\Sigma_2)$,
we know from \cite{Hajaiej} that problem \eqref{P1} has a positive solution $u_0\in C^2(\Omega)\cap
L^{\infty}(\Omega)$ satisfying
\begin{equation}\label{c-rho}
c_\rho:=\inf\limits_{w\in B_\rho}J(w)=J(u_0)<0,
\end{equation}
where $B_{\rho}=\{u\in H_0^1(\Omega): \|u\|\leq\rho\}$ and
\begin{align}\label{equat1}
\rho=\left(\frac{\lambda_1(\Omega)-\lambda}{\lambda_1(\Omega)\mu}\right)^{\frac{1}{2}}S \quad  \text{if}
\ (\lambda, \mu, \theta)\in A_1, \
and \ \rho=\mu^{-\frac{1}{2}}S \quad \text{if} \ (\lambda, \mu, \theta)\in A_2.
\end{align}
This means that $u_0$ is a local minimum of the energy functional $J(u)$ and fulfills
\begin{align}\label{equality3}
\int_{\Omega}\nabla u_0\nabla v\mathrm{d}x-\lambda\int_{\Omega}u_0^+ v\mathrm{d}x
-\mu\int_{\Omega}(u_0^+)^3v\mathrm{d}x
-\theta\int_{\Omega}u_0^+ v \log (u_0^+)^{2}\mathrm{d}x=0, \ \ \forall\ v\in H_0^1(\Omega).
\end{align}
Moreover, by Lemma A.1 in \cite{Hajaiej} we know that there exists $\alpha>0$ such that
\begin{align}\label{ineqt1}
J(u)\geq \alpha \text{\ for all} \ \|u\|=\rho.
\end{align}

Now we prove that the positive local minimum solution $u_0$ to problem \eqref{P1} is also a
positive least energy solution when $(\lambda, \mu, \theta)\in A_1\cup A_2$. For this, set
\begin{equation}\label{c-k}
K=\{u\in H_0^1(\Omega):\ J'(u)=0\},\qquad c_K:=\inf\limits_{u\in K}J(u).
\end{equation}
By Theorem $A.2$ in \cite{Hajaiej} one knows that $c_K<0$ is achieved at a positive least energy
solution $u_k$ to problem \eqref{P1} when $(\lambda,\theta,\mu)\in \Sigma_1\cup\Sigma_2$.

\begin{lemma}\label{gss}
Assume that $(\lambda, \mu, \theta)\in A_1\cup A_2$. Then the positive local minimum solution
$u_0$ is also a positive least energy solution and $c_{\rho}=c_{K}$, where $c_\rho$ is given in
\eqref{c-rho} and $c_K$ is given in \eqref{c-k}.
\end{lemma}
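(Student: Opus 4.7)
The plan is to prove $c_K = c_\rho$, which immediately implies $u_0$ is a positive least energy solution. The inequality $c_K \leq c_\rho$ is immediate, since $u_0$ is a critical point of $J$ by \eqref{equality3}, hence $u_0 \in K$ and $c_K \leq J(u_0) = c_\rho$. The heart of the argument is the reverse inequality. For this I would invoke Theorem A.2 of \cite{Hajaiej} to produce a positive critical point $u_k$ realizing $c_K < 0$, and then show that $u_k \in B_\rho$, which together with the definition of $c_\rho$ yields $c_K = J(u_k) \geq c_\rho$.

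To establish $\|u_k\| \leq \rho$, I argue by contradiction. Suppose $\|u_k\| > \rho$ and set $t_0 := \rho/\|u_k\| \in (0,1)$, so that $\|t_0 u_k\| = \rho$. Then \eqref{ineqt1} forces $J(t_0 u_k) \geq \alpha > 0$. On the other hand, I will show that $J(t u_k) < 0$ for every $t \in (0,1]$, which contradicts the previous inequality and proves $\|u_k\| \leq \rho$.

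The cleanest way to execute this is to pass to the auxiliary function $\phi(t) := J(t u_k)/t^2$ for $t > 0$. Using the positivity of $u_k$, a direct computation gives
\[
\phi(t) = \tfrac{1}{2}\|u_k\|^2 - \tfrac{\lambda}{2}\|u_k\|_2^2 - \tfrac{\mu t^2}{4}\|u_k\|_4^4 - \theta\|u_k\|_2^2 \log t - \tfrac{\theta}{2}\int_\Omega u_k^2 \log u_k^2\,\mathrm{d}x + \tfrac{\theta}{2}\|u_k\|_2^2,
\]
so $\phi''(t) = -\tfrac{\mu}{2}\|u_k\|_4^4 + \theta\|u_k\|_2^2/t^2 < 0$ because $\theta < 0$. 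Hence $\phi$ is strictly concave on $(0,\infty)$ with a unique maximum at some $\bar t > 0$. Writing $g(t) = t^2\phi(t) = J(tu_k)$, the critical-point identity $\langle J'(u_k),u_k\rangle = 0$ rewrites as $g'(1) = 0$, equivalently $\phi'(1) = -2\phi(1) = -2c_K > 0$. By strict concavity this forces $1 < \bar t$, so $\phi$ is strictly increasing on $(0,1]$ and $\phi(t) \leq \phi(1) = c_K < 0$ there. Consequently $J(tu_k) = t^2\phi(t) < 0$ on $(0,1]$, contradicting $J(t_0 u_k) \geq \alpha > 0$ and completing the containment.

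The main obstacle is identifying the right auxiliary function. A naive analysis of $t \mapsto J(tu_k)$ itself is awkward because this map can have two positive critical points due to the interplay between the quartic and logarithmic terms. Dividing by $t^2$ absorbs the quadratic part of $J$ into an additive constant and produces a manifestly strictly concave function, after which the critical-point identity $\phi'(1) = -2 c_K > 0$ immediately forces the required monotonicity on $(0,1]$.
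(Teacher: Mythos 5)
Your proof is correct, but it takes a genuinely different route from the paper's. The paper proves $\|u_k\|\leq\rho$ by a direct quantitative estimate: it evaluates $J(u_k)-\frac{1}{4}\langle J'(u_k),u_k\rangle$, bounds the remaining logarithmic terms from below via \eqref{log-1} (as in \eqref{ineqt2}, \eqref{ineqt3}), and uses $c_K<0$ to get an explicit bound such as $\|u_k\|^2<\frac{\lambda_1(\Omega)}{\lambda_1(\Omega)-\lambda}|\theta|e|\Omega|$; the defining inequalities of $A_1$ and $A_2$ then force this bound to be at most $\rho^2$ from \eqref{equat1}, with a separate case analysis for $A_1$ and $A_2$. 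Your argument replaces this with a fibering/concavity trick: since $u_k>0$, the function $\phi(t)=J(tu_k)/t^2$ is strictly concave (your formulas for $\phi$ and $\phi''$ check out), and the identities $\phi(1)=c_K<0$ and $\langle J'(u_k),u_k\rangle=0$ (equivalently $\phi'(1)=-2c_K>0$) force $J(tu_k)<0$ on $(0,1]$, which is incompatible with \eqref{ineqt1} at $t_0=\rho/\|u_k\|$ if $\|u_k\|>\rho$. What your approach buys is uniformity and economy: it is case-free, avoids the logarithmic estimates \eqref{ineqt2}--\eqref{ineqt3}, and in fact only needs the sphere estimate \eqref{ineqt1} together with $c_K<0$, so it would work whenever those hold (e.g., on the larger parameter sets of \cite{Hajaiej}), whereas the sharpened inequalities defining $A_1,A_2$ are exactly what the paper's computation consumes at this step. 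What the paper's approach buys is an explicit a priori norm bound on every least energy solution, obtained without invoking the mountain-pass geometry on the sphere. Both arguments use the same trivial half $c_K\leq c_\rho$ and the same input from \cite{Hajaiej} (existence of a positive $u_k$ achieving $c_K<0$), so your proof is a legitimate, and arguably cleaner, alternative.
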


\begin{proof}
Obviously, $c_{K}\leq c_{\rho}$. We claim that $c_{K}\geq c_{\rho}$ when
$(\lambda, \mu, \theta)\in A_1\cup A_2$. The proof is divided into two cases.

\textbf{Case 1: $(\lambda, \mu, \theta)\in A_1$.} Since $\theta<0$, by \eqref{log-1} we have
\begin{align}\label{ineqt2}
&\frac{\theta}{2}\int_{\Omega}|u_k^{+}|^{2}\mathrm{d}x
-\frac{\theta}{4}\int_{\Omega}(u_k^+)^{2}\log (u_k^+)^{2}\mathrm{d}x\nonumber\\
=&-\frac{\theta}{4}\int_{\Omega}(u_k^+)^{2}(-2+\log (u_k^+)^{2})\mathrm{d}x\nonumber\\
\geq&-\frac{\theta}{4}e^2\int_{\{x\in\Omega: e^{-2}(u_k^+)^{2}\leq1\}}e^{-2}(u_k^+)^{2}\log(e^{-2}(u_k^+)^{2})\mathrm{d}x\\
\geq&~\frac{\theta}{4}e|\Omega|.\nonumber
\end{align}
Then according to $\lambda\in [0,\lambda_1(\Omega))$ and \eqref{ineqt2}, we obtain
\begin{align*}
0>c_{K}&=J(u_k)-\frac{1}{4}\langle J'(u_k),u_k\rangle\nonumber\\
&=\frac{1}{4}\| u_k\|^{2}-\frac{\lambda}{4}\int_{\Omega}|u_k^+|^{2}\mathrm{d}x +\frac{\theta}{2}\int_{\Omega}|u_k^{+}|^{2}\mathrm{d}x
-\frac{\theta}{4}\int_{\Omega}(u_k^+)^{2}\log (u_k^+)^{2}\mathrm{d}x\nonumber\\
&\geq\frac{\lambda_1(\Omega)-\lambda}{4\lambda_1(\Omega)}\| u_k\|^{2}+\frac{\theta}{4}e|\Omega|,
\end{align*}
which means that
$$\|u_k\|<\left(\frac{\lambda_1(\Omega)}{\lambda_1(\Omega)-\lambda}|\theta|e|\Omega|\right)^{\frac{1}{2}}.$$
Since $(\lambda, \mu, \theta)\in A_1$, by \eqref{equat1} we have
$$\|u_k\|\leq \rho,$$
i.e., $u_k\in B_{\rho}$ and consequently $c_{\rho}\leq J(u_k)=c_{K}$ for $(\lambda, \mu, \theta)\in A_1$.

\textbf{Case 2: $(\lambda, \mu, \theta)\in A_2$.} Since $\theta<0$, by a similar argument to \eqref{ineqt2}, we have
\begin{align}\label{ineqt3}
-\frac{\lambda}{4}\int_{\Omega}|u_k^+|^{2}\mathrm{d}x +\frac{\theta}{2}\int_{\Omega}|u_k^{+}|^{2}\mathrm{d}x
-\frac{\theta}{4}\int_{\Omega}(u_k^+)^{2}\log (u_k^+)^{2}\mathrm{d}x
\geq\frac{\theta}{4}e^{1-\frac{\lambda}{\theta}}|\Omega|.
\end{align}
Using \eqref{ineqt3}, one obtains
\begin{align}\label{ineq5}
0>c_{K}&=J(u_k)-\frac{1}{4}\langle J'(u_k),u_k\rangle\nonumber\\
&=\frac{1}{4}\| u_k\|^{2}-\frac{\lambda}{4}\int_{\Omega}|u_k^+|^{2}\mathrm{d}x +\frac{\theta}{2}\int_{\Omega}|u_k^{+}|^{2}\mathrm{d}x
-\frac{\theta}{4}\int_{\Omega}(u_k^+)^{2}\log (u_k^+)^{2}\mathrm{d}x\nonumber\\
&\geq\frac{1}{4}\| u_k\|^{2}+\frac{\theta}{4}e^{1-\frac{\lambda}{\theta}}|\Omega|.
\end{align}
From \eqref{ineq5}, we know
\begin{align*}
\|u_k\|<(|\theta|e^{1-\frac{\lambda}{\theta}}|\Omega|)^{\frac{1}{2}},
\end{align*}
which, together with $(\lambda, \mu, \theta)\in A_2$ and \eqref{equat1}, implies that
$$\|u_k\|\leq \rho.$$
Hence $u_k\in B_{\rho}$ and $c_{\rho}\leq J(u_k)=c_{K}$ for $(\lambda, \mu, \theta)\in A_2$.
Therefore, $c_K= c_{\rho}$ and $u_0$  is also a positive least energy solution. This completes the proof.
\end{proof}



Next we shall prove that $J(u)$ satisfies the mountain pass geometry around $u_0$.

\begin{lemma}\label{MPG}
Assume that $(\lambda,\mu, \theta)\in A_1\cup A_2$. Then

$\mathrm{(i)}$ there exists an $r>\|u_0\|$ such that $J(w)>J(u_0)$ for all $w\in H_0^1(\Omega)$ with $\|w\|=r$;

$\mathrm{(ii)}$ there exists a function $v^{*}\in H_0^1(\Omega)$ such that $\|u_0+v^{*}\|>r$ and $J(u_0+v^{*})\leq J(u_0)$.
\end{lemma}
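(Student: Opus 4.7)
The plan is to verify the two mountain-pass geometric conditions separately. Part (i) is immediate from the observation that $u_0$ must lie strictly inside the ball $B_\rho$, since its energy is negative while $J$ is uniformly bounded below by $\alpha>0$ on the boundary sphere $\{\|w\|=\rho\}$. Part (ii) will be handled by testing along a fixed positive direction $t\phi$: because $u_0>0$ in $\Omega$ one removes all the positive-part truncations, and then a simple power-counting argument shows the destabilizing critical term $-\tfrac{\mu}{4}\|u^+\|_4^4$ dominates as $t\to\infty$, driving $J$ to $-\infty$.

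For (i), by \eqref{c-rho} we have $J(u_0)=c_\rho<0$, while \eqref{ineqt1} gives $J(w)\geq\alpha>0$ for every $w$ with $\|w\|=\rho$. These two facts are incompatible with $\|u_0\|=\rho$, and together with $u_0\in B_\rho$ they force the strict inequality $\|u_0\|<\rho$. Setting $r:=\rho$ we obtain $r>\|u_0\|$ and
\[
J(w)\geq\alpha>0>J(u_0)\qquad\text{for all }w\in H_0^1(\Omega)\text{ with }\|w\|=r,
\]
which is exactly (i).

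For (ii), fix any $\phi\in C_c^\infty(\Omega)$ with $\phi\geq 0$ and $\phi\not\equiv 0$. Since $u_0>0$ in $\Omega$, $(u_0+t\phi)^+=u_0+t\phi$ for every $t\geq 0$, so
\begin{align*}
J(u_0+t\phi)=&\,\tfrac{1}{2}\|u_0+t\phi\|^2-\tfrac{\lambda}{2}\|u_0+t\phi\|_2^2-\tfrac{\mu}{4}\|u_0+t\phi\|_4^4\\
&\,-\tfrac{\theta}{2}\int_{\Omega}(u_0+t\phi)^2\bigl[\log(u_0+t\phi)^2-1\bigr]\,\mathrm{d}x.
\end{align*}
Expanding each term in powers of $t$: the two quadratic contributions are $O(t^2)$, while the critical term equals $-\tfrac{\mu}{4}\|\phi\|_4^4\,t^4+O(t^3)$. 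For the logarithmic integral, the main obstacle is that it grows like $t^2\log t$, which a priori could compete with the $t^4$ decay, so one must verify it is truly lower order. Using \eqref{log-1} on the set $\{u_0+t\phi\leq 1\}$ and \eqref{log-3} with $\delta=\tfrac{1}{2}$ on the complement yields the pointwise bound $\bigl|s^2\log s^2\bigr|\leq \tfrac{1}{e}\chi_{\{s\leq 1\}}+\tfrac{2}{e}s^3\chi_{\{s>1\}}$, and hence the full logarithmic contribution is $O(1+\|u_0+t\phi\|_3^3)=O(t^3)$. Therefore
\[
J(u_0+t\phi)=-\tfrac{\mu}{4}\|\phi\|_4^4\,t^4+O(t^3)\longrightarrow-\infty\quad\text{as }t\to\infty,
\]
so one can pick $t^*$ large enough that both $\|u_0+t^*\phi\|>r$ and $J(u_0+t^*\phi)\leq J(u_0)$, and take $v^*:=t^*\phi$. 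This completes the plan; the only non-trivial estimate is the $O(t^3)$ control of the logarithmic term, and it follows directly from the elementary inequalities already stated in Lemma \ref{logarithmic inequality}.
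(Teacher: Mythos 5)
Your proof is correct and takes essentially the same route as the paper: part (i) is exactly the paper's argument (take $r=\rho$ and combine \eqref{ineqt1} with $J(u_0)=c_\rho<0$, which also forces $\|u_0\|<\rho$), and part (ii) simply carries out in detail the ``direct computation'' the paper leaves implicit, showing $J(u_0+t\phi)\to-\infty$ along a ray by letting the critical $t^4$ term dominate the $O(t^3)$ logarithmic contribution. Your restriction to a nonnegative direction $\phi\geq 0$ is a sensible refinement, since the paper's claim for arbitrary $v\in H_0^1(\Omega)\setminus\{0\}$ would fail for $v\leq 0$ (where the positive part degenerates), while only the existence of one such direction is needed here and in the later application with $\psi_{\varepsilon}\geq 0$.
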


\begin{proof}
Take $r=\rho$. Then the conclusion of $\mathrm{(i)}$ follows directly from \eqref{ineqt1} and the fact that $J(u_0)=c_\rho<0$.

Next, we prove $\mathrm{(ii)}$. For any $v\in H_0^1(\Omega)\backslash\{0\}$, direct computation shows that
$$J(u_0+\beta v)\rightarrow-\infty, \ \text{as} \ \ \beta\rightarrow+\infty.$$
Consequently, there exists $\beta^{*}$ suitably large such that
$$\|u_0+\beta^{*}v\|>r\ \  \text{and} \ \ J(u_0+\beta^{*}v)\leq J(u_0).$$
Set $v^{*}=\beta^{*}v$. The proof of $\mathrm{(ii)}$ is complete.
\end{proof}

Now we define
$$\Gamma:=\left\{\gamma\in C([0,1],H_0^1(\Omega)):\gamma(0)=u_0,J(\gamma(1))\leq J(u_0)\right\},$$
and
$$c_{M}:=\inf_{\gamma\in \Gamma}\max_{t\in[0,1]}J(\gamma(t)).$$
Then we know from Lemma \ref{MPG} and \eqref{ineqt1} that $c_{M}\geq\alpha>0$.


\begin{lemma}\label{PS-condition}
If $(\lambda, \mu, \theta)\in A_1\cup A_2$ and
\begin{align*}
c<c_{K}+\frac{1}{4}\mu^{-1}S^2,
\end{align*}
then $J(u)$ satisfies the $(PS)_c$ condition.
\end{lemma}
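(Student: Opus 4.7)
The plan is to follow the standard Br\'ezis-Nirenberg splitting scheme adapted to the logarithmic perturbation. Let $\{u_n\}\subset H_0^1(\Omega)$ denote a $(PS)_c$ sequence, so $J(u_n)\to c$ and $J'(u_n)\to 0$ in $H^{-1}(\Omega)$. First I would establish boundedness of $\{u_n\}$ via the same algebraic identity used in Lemma \ref{gss}, namely
\begin{align*}
J(u_n)-\tfrac{1}{4}\langle J'(u_n),u_n\rangle=\tfrac{1}{4}\|u_n\|^2-\tfrac{\lambda}{4}\|u_n^+\|_2^2+\tfrac{\theta}{2}\|u_n^+\|_2^2-\tfrac{\theta}{4}\int_\Omega (u_n^+)^2\log(u_n^+)^2\,\mathrm{d}x.
\end{align*}
Applying \eqref{log-1} exactly as in \eqref{ineqt2} (case $(\lambda,\mu,\theta)\in A_1$, together with $\lambda<\lambda_1(\Omega)$ and Poincar\'e) or as in \eqref{ineqt3} (case $(\lambda,\mu,\theta)\in A_2$), the right-hand side is bounded below by $C_1\|u_n\|^2-C_2$ with $C_1>0$; since the left-hand side equals $c+o_n(1)+o_n(1)\|u_n\|$, this forces $\{\|u_n\|\}$ to be bounded.

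Next, I would extract a subsequence with $u_n\rightharpoonup u$ in $H_0^1(\Omega)$, $u_n\to u$ in $L^p(\Omega)$ for every $1\le p<4$, and pointwise a.e. in $\Omega$. The linear and critical contributions in $\langle J'(u_n),\varphi\rangle$ pass to the limit as usual ($(u_n^+)^3\rightharpoonup (u^+)^3$ in $L^{4/3}$ from a.e. convergence plus boundedness). For the logarithmic term, the subcritical growth bound $|t\log t^2|\le C_\delta(1+t^{1+\delta})$ (a direct consequence of \eqref{log-3}) combined with strong $L^p$-convergence delivers $\int_\Omega u_n^+\varphi\log(u_n^+)^2\,\mathrm{d}x\to\int_\Omega u^+\varphi\log(u^+)^2\,\mathrm{d}x$ via uniform integrability. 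Consequently $J'(u)=0$, so $u\in K$ and $J(u)\ge c_K$.

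Then I would apply the Br\'ezis-Lieb lemma to $\|\cdot\|^2$ and $\|\cdot\|_4^4$. Setting $v_n:=u_n-u$,
\begin{align*}
\|v_n\|^2=\|u_n\|^2-\|u\|^2+o_n(1),\qquad \|v_n^+\|_4^4=\|u_n^+\|_4^4-\|u^+\|_4^4+o_n(1),
\end{align*}
and, because the subcritical linear and logarithmic contributions converge strongly by the same uniform-integrability argument, this yields
\begin{align*}
J(u_n)=J(u)+\tfrac{1}{2}\|v_n\|^2-\tfrac{\mu}{4}\|v_n^+\|_4^4+o_n(1),\qquad \langle J'(u_n),u_n\rangle=\|v_n\|^2-\mu\|v_n^+\|_4^4+o_n(1).
\end{align*}
Passing to a further subsequence with $\|v_n\|^2\to b\ge 0$, the second relation forces $\mu\|v_n^+\|_4^4\to b$. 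The Sobolev inequality \eqref{S} gives $\|v_n\|^2\ge S\|v_n^+\|_4^2$, so $b\ge S(b/\mu)^{1/2}$, forcing either $b=0$ or $b\ge\mu^{-1}S^2$. The first relation reads $c=J(u)+\tfrac{1}{4}b\ge c_K+\tfrac{1}{4}b$, so $b\ge\mu^{-1}S^2$ would contradict the hypothesis $c<c_K+\tfrac{1}{4}\mu^{-1}S^2$. Hence $b=0$ and $u_n\to u$ strongly in $H_0^1(\Omega)$.

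The main obstacle I anticipate is a clean treatment of the logarithmic integrals so that they disappear from both the Br\'ezis-Lieb energy splitting and the splitting of $\langle J'(u_n),u_n\rangle$. This is handled by exploiting the subcritical nature of $t^2\log t^2$ (dominated by $t^{2-\delta}+t^{2+\delta}$ for any small $\delta>0$ via \eqref{log-3}) together with strong $L^p$-convergence for every $p<4$, combined through a standard Vitali/uniform-integrability argument.
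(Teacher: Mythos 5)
Your proposal is correct and follows essentially the same route as the paper: boundedness from $J(u_n)-\frac14\langle J'(u_n),u_n\rangle$ plus the logarithmic inequality, identification of the weak limit as a critical point (so $J(u)\ge c_K$), Br\'ezis--Lieb splitting of the norm and the critical term, and the Sobolev dichotomy $b=0$ or $b\ge\mu^{-1}S^2$ contradicting $c<c_K+\frac14\mu^{-1}S^2$. The only cosmetic difference is that the paper handles the convergence of the logarithmic integrals by citing \cite{Li2023}, whereas you sketch the underlying uniform-integrability argument directly.
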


\begin{proof}
Assume that $\{u_n\}\subset H_0^1(\Omega)$ is a $(PS)_c$ sequence of $J(u)$, i.e., as $n\rightarrow \infty$,
$$J(u_n)\rightarrow c \ \text{and} \ J'(u_n)\rightarrow0 \ in \ H^{-1}(\Omega).$$
Then  using \eqref{ineqt3}, we have, for n large enough,
\begin{align*}
c+1+o_n(1)\|u_n\|&\geq
J(u_n)-\frac{1}{4}\langle J'(u_n),u_n\rangle\nonumber\\
&=\frac{1}{4}\| u_n\|^{2}-\frac{\lambda}{4}\int_{\Omega}|u_n^+|^{2}\mathrm{d}x +\frac{\theta}{2}\int_{\Omega}|u_n^{+}|^{2}\mathrm{d}x
-\frac{\theta}{4}\int_{\Omega}(u_n^+)^{2}\log (u_n^+)^{2}\mathrm{d}x\nonumber\\
&\geq\frac{1}{4}\| u_n\|^{2}+\frac{\theta}{4}e^{1-\frac{\lambda}{\theta}}|\Omega|,\nonumber
\end{align*}
which implies that $\{u_n\}$ is bounded in $H_0^1(\Omega)$. As a consequence, there exist a subsequence
of $\{u_n\}$ (still denoted by $\{u_n\}$) and a $u\in H_0^1(\Omega)$ such that as $n\rightarrow\infty$,
\begin{align}\label{convergence}
&u_{n}\rightharpoonup u \ in \ H_{0}^1(\Omega),\nonumber\\
&u_{n}\rightarrow u \ in \ L^p(\Omega)\ (1\leq p<4),\\
&|u_{n}|^2u_n\rightharpoonup |u|^{2}u \ in \ L^{\frac{4}{3}}(\Omega),\nonumber\\
&u_{n}\rightarrow u \ \ a.e.\ in \ \Omega.\nonumber
\end{align}

To show that $\{u_n\}$ has a strongly convergent subsequence in $H_0^1(\Omega)$, set $w_n=u_n-u$.
Then $\{w_n\}$ is also bounded in $H_0^1(\Omega)$ and there exists a subsequence of $\{w_{n}\}$ which we still denote by $\{w_{n}\}$ such that
\begin{equation}\label{wn1}
\lim\limits_{n\rightarrow+\infty}\|w_n\|^2=k\geq0.
\end{equation}
Moreover, it follows from the weak convergence $u_{n}\rightharpoonup u$ in $H_{0}^1(\Omega)$ that
\begin{eqnarray}\label{wn2}
\|u_n\|^{2}
=\| w_n\|^2+\| u\|^{2}+o_n(1), \qquad  n\rightarrow\infty.
\end{eqnarray}
By using the Br\'{e}zis-Lieb's lemma \cite{BrezisLieb}, we have
\begin{equation}\label{wn3}
\|u_n^+\|_{4}^{4}=\|w_n^+\|_{4}^{4}+
\|u^+\|_{4}^{4}+o_n(1),\qquad n\rightarrow\infty.
\end{equation}
In addition, since $\{ u_n\}$ is bounded in $H_0^1(\Omega)$, $\{u_n^+\}$ is also bounded in $H_0^1(\Omega)$.
Combining this with \eqref{convergence}, we obtain
\begin{align}\label{convergence2}
&u_{n}^+\rightharpoonup u^+ \ in \ H_{0}^1(\Omega),\ n\rightarrow\infty,\nonumber\\
&u_{n}^+\rightarrow u^+ \ in \ L^p(\Omega)\ (1\leq p<4),\ n\rightarrow\infty,\\
&|u_{n}^+|^2u_n^+\rightharpoonup |u^+|^{2}u^+ \ in \ L^{\frac{4}{3}}(\Omega),\ n\rightarrow\infty,\nonumber\\
&u_{n}^+\rightarrow u^+ \ \ a.e.\ in \ \Omega,\ n\rightarrow\infty.\nonumber
\end{align}
Similar to the proof of $(3.4)$ and $(3.7)$ in \cite{Li2023}, one has
\begin{equation}\label{log1}
\lim_{n\rightarrow\infty}\int_{\Omega}(u_n^+)^{2}\log (u_n^+)^2\mathrm{d}x
=\int_{\Omega}(u^+)^{2}\log (u^+)^2\mathrm{d}x,
\end{equation}
and
\begin{equation}\label{log2}
\lim_{n\rightarrow\infty}\int_{\Omega}u_n^+\phi\log (u_n^+)^2\mathrm{d}x
=\int_{\Omega}u^+\phi\log (u^+)^2\mathrm{d}x,\qquad \forall\ \phi\in H_0^1(\Omega).
\end{equation}
Since $J'(u_n)\rightarrow0$ as $n\rightarrow\infty$, using \eqref{convergence}, \eqref{convergence2} and \eqref{log2}, we have, for any $\phi\in H_0^1(\Omega)$,
\begin{align}\label{equa1}
o_n(1)=\langle J'(u_n),\phi\rangle=\langle J'(u),\phi\rangle+o_n(1), \qquad n\rightarrow\infty,
\end{align}
It follows from \eqref{equa1} that $J'(u)=0$ and
\begin{align*}
0=\langle J'(u),u^{-}\rangle=\int_{\Omega}|\nabla u^{-}|^2\mathrm{d}x,
\end{align*}
which means that $u\geq0$ and $u$ is a nonnegative weak solution to problem \eqref{P1}.
Consequently,
\begin{align}\label{ineqt4}
J(u)\geq c_{K}.
\end{align}
Taking $\phi=u$ in \eqref{equa1}, one obtains
\begin{align}\label{equa2}
\langle J'(u),u\rangle=0.
\end{align}

Applying $J'(u_n)\rightarrow0$ as $n\rightarrow\infty$ again, recalling \eqref{wn2}, \eqref{wn3},
\eqref{convergence2}, \eqref{log1}, \eqref{equa2} and the boundedness of $\{u_n\}$, one gets
\begin{align*}
o_n(1)=&\langle J'(u_n),u_n\rangle\\
=&\int_{\Omega}|\nabla u_n|^2\mathrm{d}x-\lambda\int_{\Omega}|u_n^+|^2\mathrm{d}x
-\theta\int_{\Omega}(u_n^+)^2 \log (u_n^+)^{2}\mathrm{d}x- \mu\int_{\Omega}|u_n^+|^4\mathrm{d}x\\
=&\langle J'(u),u\rangle+\|w_n\|^2-\mu\int_{\Omega}|w_n^+|^4\mathrm{d}x+o_n(1)\\
=&\|w_n\|^2-\mu\int_{\Omega}|w_n^+|^4\mathrm{d}x+o_n(1),
\end{align*}
which implies that
\begin{equation}\label{equa3}
\|w_n\|^2=\mu\int_{\Omega}|w_n^+|^4\mathrm{d}x+o_n(1),\qquad \text{as} \  n\rightarrow\infty.
\end{equation}
Since $J(u_n)\rightarrow c$ as $n\rightarrow\infty$, by \eqref{wn2}, \eqref{wn3}, \eqref{convergence2} and \eqref{log1}, we have
\begin{align}\label{equa4}
&c+o_n(1)=J(u_n)\nonumber\\
=&\frac{1}{2}\int_{\Omega}|\nabla u_n|^2\mathrm{d}x-\frac{\lambda}{2}\int_{\Omega}|u_n^+|^2\mathrm{d}x
-\frac{\theta}{2}\int_{\Omega}(u_n^+)^2\left(\log(u_n^+)^2-1\right)\mathrm{d}x\nonumber-\frac{\mu}{4}\int_{\Omega}|u_n^+|^4\mathrm{d}x\nonumber\\
=&J(u)+\frac{1}{2}\| w_n\|^2-\frac{\mu}{4}\int_{\Omega}|w_n^+|^4\mathrm{d}x+o_n(1),\qquad  n\rightarrow\infty.
\end{align}
Letting $n\rightarrow\infty$ in \eqref{equa4} and recalling \eqref{wn1} and \eqref{equa3}, we get
\begin{align*}
J(u)=c-\frac{1}{4}k.
\end{align*}
By \eqref{S},
\begin{align}\label{inequality1}
\left(\int_{\Omega}|w_n^+|^4\mathrm{d}x\right)^{\frac{1}{2}}\leq\left(\int_{\Omega}|w_n|^4\mathrm{d}x\right)^{\frac{1}{2}}
\leq\frac{1}{S}\|w_n\|^2.
\end{align}
Letting $n\rightarrow\infty$ in \eqref{inequality1} and in view of \eqref{wn1} and \eqref{equa3}, we can deduce that
\begin{align*}
(\frac{1}{\mu})^{\frac{1}{2}}k^{\frac{1}{2}}\leq\frac{1}{S}k.
\end{align*}
If $k>0$, we have $k\geq\mu^{-1}S^2$ since $\mu>0$. Then
\begin{align}\label{equa5}
J(u)=c-\frac{1}{4}k\leq c-\frac{1}{4}\mu^{-1}S^2<c_{K},
\end{align}
which contradicts with \eqref{ineqt4}.
Hence $k=0$, i.e., $u_n\rightarrow u$ in $H_0^1(\Omega)$ as $n\rightarrow\infty$.
The proof is complete.
\end{proof}

\section{Estimates on $c_{M}$}

From Lemma \ref{PS-condition} we know that one needs to control the mountain pass level $c_M$ from above by $c_{K}+\frac{1}{4}\mu^{-1}S^2$
to ensure the compactness of the $(PS)$ sequence $\{u_n\}$ and to obtain the existence of a mountain pass solution. This is the main
task of this section. First we claim that to prove $c_M<c_{K}+\frac{1}{4}\mu^{-1}S^2$, it suffices to show the existence of a function $\widetilde{v}\in H_0^1(\Omega)\setminus\{0\}$
such that
\begin{equation*}
\sup_{\beta\geq0}J(u_0+\beta \widetilde{v})<c_{K}+\frac{1}{4}\mu^{-1}S^2.
\end{equation*}
Indeed, from the proof of $\mathrm{(ii)}$ of Lemma \ref{MPG}, we know that there exists $\beta^{*}$ such that $\|u_0+\beta^{*}\widetilde{v}\|>r$ and
$J(u_0+\beta^{*}\widetilde{v})\leq J(u_0)$. Then we have
\begin{align}\label{ineqt5}
0<c_{M}=\inf_{\gamma\in \Gamma}\max_{t\in [0,1]}J(\gamma(t))
\leq\max_{t\in [0,1]}J(u_0+t\beta^{*}\widetilde{v})\leq\sup_{\beta\geq0}J(u_0+\beta\widetilde{v})<c_{K}+\frac{1}{4}\mu^{-1}S^2.
\end{align}

Without loss of generality, we may assume that $0\in \Omega$ and there exists $\varrho >0$ such that $B_{2\varrho}(0)\subset\Omega$ and
\begin{align}\label{eq1}
m_{\varrho}:=\inf_{x\in B_{\varrho}(0)}u_0(x)\geq\frac{1}{2}u_0(0).
\end{align}
Notice that \eqref{eq1} is guaranteed by the continuity of $u_0$. For $\varepsilon>0$ and $y\in \mathbb{R}^4$, we consider the Aubin-Talenti bubble (\cite{Aubin1976},\cite{Talenti1976}) $U_{\varepsilon,y}(x)\in D^{1,2}(\mathbb{R}^4)$
defined by
$$U_{\varepsilon,y}(x)=\frac{2\sqrt{2}\varepsilon}{\varepsilon^2+|x-y|^2}, \ \ x\in \mathbb{R}^4.$$
It is well known that $U_{\varepsilon,y}$ is a solution to the following equation
$$-\Delta u=u^{3} \ \ in \ \ \mathbb{R}^4,$$
and
$$\int_{\mathbb{R}^4}|\nabla U_{\varepsilon,y}|^2\mathrm{d}x=\int_{\mathbb{R}^4}|U_{\varepsilon,y}|^4\mathrm{d}x=S^2.$$
Here $S$ is the best Sobolev constant of $D^{1,2}(\mathbb{R}^4)\hookrightarrow L^{4}(\mathbb{R}^4)$, characterized as
$$S=\inf\limits_{u\in D^{1,2}(\mathbb{R}^4)\backslash\{0\}}\frac{\int_{\mathbb{R}^4}|\nabla U_{\varepsilon,y}|^2\mathrm{d}x}
{(\int_{\mathbb{R}^4}|U_{\varepsilon,y}|^4\mathrm{d}x)^{\frac{1}{2}}},$$
where $D^{1,2}(\mathbb{R}^4)=\{u\in L^2(\mathbb{R}^4):|\nabla u|\in L^2(\mathbb{R}^4)\}$ with norm
$\|u\|_{D^{1,2}}:=\left(\int_{\mathbb{R}^4}|\nabla u|^2\mathrm{d}x\right)^{\frac{1}{2}}$.
Note that $S$ is the same as the constant defined in \eqref{S} when $N=4$.

Let $\varphi(x)\in C_0^{\infty}(\Omega)$ be a radial function such that $\varphi(x)\equiv1$ for $0\leq|x|\leq\varrho$;
$0\leq\varphi(x)\leq1$ for $\varrho\leq|x|\leq2\varrho$ and $\varphi(x)\equiv0$ for $|x|\geq2\varrho$. Define
\begin{align}
\psi_{\varepsilon}(x)=\varphi(x)u_{\varepsilon}(x),
\end{align}
where $u_{\varepsilon}(x)=U_{\varepsilon,0}(x)$. Then we have the following estimates (see [\cite{Capozzi1985}, Lemma 2.1]
and [\cite{Willem}, Lemma 1.46]).

\begin{lemma}\label{estimate1}
As $\varepsilon\rightarrow 0^+$,
\begin{eqnarray*}
\int_{\Omega}|\nabla \psi_{\varepsilon}|^2\mathrm{d}x=S^2+O(\varepsilon^2),
\end{eqnarray*}
\begin{eqnarray*}
\int_{\Omega}|\psi_{\varepsilon}|\mathrm{d}x=O(\varepsilon),
\end{eqnarray*}
\begin{eqnarray*}\label{es2}
\int_{\Omega}|\psi_{\varepsilon}|^2\mathrm{d}x=d\varepsilon^2|\log\varepsilon|+O(\varepsilon^2),
\end{eqnarray*}
\begin{eqnarray*}
\int_{\Omega}|\psi_{\varepsilon}|^3\mathrm{d}x=O(\varepsilon),
\end{eqnarray*}
and
\begin{eqnarray*}\label{es3}
\int_{\Omega}|\psi_{\varepsilon}|^4\mathrm{d}x=S^2+O(\varepsilon^4),
\end{eqnarray*}
where $d$ is a positive constant.
\end{lemma}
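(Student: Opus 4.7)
The plan is to compute each integral by exploiting the explicit form $u_\varepsilon(x)=\frac{2\sqrt{2}\,\varepsilon}{\varepsilon^2+|x|^2}$ together with the rescaling $y=x/\varepsilon$, and by decomposing $\Omega$ into the inner ball $B_\varrho(0)$, where $\varphi\equiv 1$ so $\psi_\varepsilon=u_\varepsilon$, and the annulus $\varrho\le|x|\le 2\varrho$, where the cutoff and its derivatives are bounded while $u_\varepsilon(x)=O(\varepsilon)$ and $|\nabla u_\varepsilon(x)|=O(\varepsilon)$ uniformly. This classical truncation argument (as in Brezis--Nirenberg and in the cited references) reduces every estimate to two pieces: a principal term coming from an explicit integral on $\mathbb{R}^4$, and an error coming from the tail $|x|\ge\varrho$ or from the annular region.

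For the Dirichlet integral I would expand $|\nabla\psi_\varepsilon|^2=\varphi^2|\nabla u_\varepsilon|^2+2\varphi u_\varepsilon\,\nabla\varphi\cdot\nabla u_\varepsilon+u_\varepsilon^2|\nabla\varphi|^2$. Using $\int_{\mathbb{R}^4}|\nabla u_\varepsilon|^2=S^2$ and the pointwise bound $|\nabla u_\varepsilon(x)|^2\lesssim \varepsilon^2/|x|^6$ for $|x|\ge\varrho$, the principal term is $S^2$ and the tail contributes $O(\varepsilon^2)$; the cross and cutoff terms are supported on the annulus where $u_\varepsilon$ and $|\nabla u_\varepsilon|$ are $O(\varepsilon)$, hence contribute $O(\varepsilon^2)$ as well. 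For $\int_\Omega|\psi_\varepsilon|^4$, one writes $\int_\Omega|\psi_\varepsilon|^4=\int_{\mathbb{R}^4}|u_\varepsilon|^4-\int_{\mathbb{R}^4\setminus B_\varrho}|u_\varepsilon|^4+\int_{\varrho\le|x|\le 2\varrho}(\varphi^4-1)|u_\varepsilon|^4$; the first piece equals $S^2$, and since $|u_\varepsilon|^4\lesssim \varepsilon^4/|x|^8$ for $|x|\ge\varrho$, both correction terms are $O(\varepsilon^4)$. The lower $L^p$ norms ($p=1,3$) are handled by the substitution $x=\varepsilon y$, which produces a factor $\varepsilon^{4-p}$ times $\int_{B_{2\varrho/\varepsilon}(0)}\frac{\varphi(\varepsilon y)^p\,(2\sqrt{2})^p}{(1+|y|^2)^p}\,\mathrm{d}y$; for $p=1$ and $p=3$ the remaining integral is $O(1)$ in $\varepsilon$, giving $O(\varepsilon^3)=O(\varepsilon)$ and $O(\varepsilon)$ respectively (the $p=1$ case is immediate, the $p=3$ case uses that the rescaled integrand decays like $|y|^{-6}$).

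The step I expect to require the most care is the $L^2$ estimate, which is the only one exhibiting a logarithmic divergence. After the substitution $x=\varepsilon y$ one obtains
\[
\int_\Omega |\psi_\varepsilon|^2\,\mathrm{d}x=\varepsilon^2\int_{\mathbb{R}^4}\frac{8\,\varphi(\varepsilon y)^2}{(1+|y|^2)^2}\,\mathrm{d}y,
\]
and the integrand behaves like $|y|^{-4}$ at infinity, so integrating in spherical shells $1\le|y|\le \varrho/\varepsilon$ produces a factor $|\log\varepsilon|$. A careful separation of the integral into $|y|\le 1$ (giving an $O(1)$ contribution), $1\le|y|\le\varrho/\varepsilon$ (giving the leading $d|\log\varepsilon|$ term with an explicit constant $d$ coming from the surface measure of $S^3$), and the annular piece (giving a bounded correction) yields the claimed expansion $d\varepsilon^2|\log\varepsilon|+O(\varepsilon^2)$. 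Once this is in hand, the remaining estimates are essentially direct, so no further obstacle is anticipated.
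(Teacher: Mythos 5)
Your overall strategy---splitting $\Omega$ into the inner ball $B_{\varrho}(0)$, where $\psi_{\varepsilon}=u_{\varepsilon}$, and the annulus $\varrho\le|x|\le2\varrho$, where $u_{\varepsilon}$ and $\nabla u_{\varepsilon}$ are uniformly $O(\varepsilon)$, followed by the rescaling $x=\varepsilon y$---is exactly the classical Brezis--Nirenberg truncation computation. The paper itself gives no proof of this lemma: it only cites [Capozzi--Fortunato--Palmieri, Lemma 2.1] and [Willem, Lemma 1.46], and those references carry out precisely the computation you describe, so your argument is a correct self-contained reconstruction rather than a genuinely different route. Your treatment of the Dirichlet term, of the $L^2$ term (the logarithmic divergence from the $|y|^{-4}$ tail in $\mathbb{R}^4$, with $d$ determined by the measure of $S^3$), and of the $L^3$ and $L^4$ terms is sound.

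One intermediate claim is incorrect, although it does not affect the conclusion stated in the lemma: for $p=1$ the rescaled integral $\int_{B_{2\varrho/\varepsilon}(0)}(1+|y|^2)^{-1}\,\mathrm{d}y$ is \emph{not} $O(1)$; in $\mathbb{R}^4$ the integrand decays only like $|y|^{-2}$, so this integral grows like $(\varrho/\varepsilon)^2$, and your claimed bound $O(\varepsilon^3)$ for $\int_{\Omega}|\psi_{\varepsilon}|\,\mathrm{d}x$ is false---the true order is exactly $\varepsilon$, since $\int_{B_{\varrho}(0)}u_{\varepsilon}\,\mathrm{d}x=2\sqrt{2}\,\varepsilon\int_{B_{\varrho}(0)}\frac{\mathrm{d}x}{\varepsilon^2+|x|^2}\rightarrow c\,\varepsilon$ with $c=2\sqrt{2}\int_{B_{\varrho}(0)}|x|^{-2}\,\mathrm{d}x>0$. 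The fix is immediate: bound $(\varepsilon^2+|x|^2)^{-1}\le|x|^{-2}$, which is integrable over the bounded set $B_{2\varrho}(0)\subset\mathbb{R}^4$, giving $\int_{\Omega}|\psi_{\varepsilon}|\,\mathrm{d}x\le C\varepsilon$, which is all the lemma asserts. With that correction your proof is complete and matches what the cited references do.
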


To compare the mountain pass level $c_M$ with $c_{K}+\frac{1}{4}\mu^{-1}S^2$, we need to estimate the logarithmic term.

\begin{lemma}\label{estimate2}
For any $\beta >0$, we have
\begin{align*}
&\int_{\Omega}(u_0+\beta \psi_{\varepsilon})^2\log(u_0+\beta \psi_{\varepsilon})^2\mathrm{d}x
-\int_{\Omega}u_0^2\log u_0^2\mathrm{d}x-\int_{\Omega}2\beta u_0 \psi_{\varepsilon}\log u_0^2\mathrm{d}x\\
\leq&~\frac{8}{e}\beta^2\int_{\Omega}u_0^{\frac{1}{2}}\psi_{\varepsilon}^2\mathrm{d}x
+\frac{8}{e}\beta^{\frac{5}{2}}\int_{\Omega}\psi_{\varepsilon}^{\frac{5}{2}}\mathrm{d}x
+2\beta\int_{\Omega}u_0\psi_{\varepsilon}\mathrm{d}x+6\beta^2\int_{\Omega}\psi_{\varepsilon}^2\mathrm{d}x.
\end{align*}
\end{lemma}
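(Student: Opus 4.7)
The plan is to apply a pointwise second-order Taylor expansion to the scalar function $g(t):=t^{2}\log t^{2}$, with base point $u_{0}(x)>0$ and increment $\beta\psi_{\varepsilon}(x)\ge 0$, and then integrate the resulting inequality over $\Omega$. Direct computation gives $g'(t)=2t\log t^{2}+2t$ and $g''(t)=4\log t+6$. In particular $g'(u_{0})\beta\psi_{\varepsilon}=2\beta u_{0}\psi_{\varepsilon}\log u_{0}^{2}+2\beta u_{0}\psi_{\varepsilon}$, so the integrand on the left-hand side of the claimed inequality can be rewritten as
\[
g(u_{0}+\beta\psi_{\varepsilon})-g(u_{0})-2\beta u_{0}\psi_{\varepsilon}\log u_{0}^{2}=\bigl[g(u_{0}+\beta\psi_{\varepsilon})-g(u_{0})-g'(u_{0})\beta\psi_{\varepsilon}\bigr]+2\beta u_{0}\psi_{\varepsilon},
\]
which already accounts for the summand $2\beta\int_{\Omega}u_{0}\psi_{\varepsilon}\,\mathrm{d}x$ on the right.

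Next I estimate the bracketed Taylor remainder via the Lagrange form: since $u_{0}>0$ in $\Omega$ (as $u_{0}\in C^{2}(\Omega)$ is a positive solution) and $\psi_{\varepsilon}\ge 0$, for each $x$ there exists $\xi(x)\in[u_{0}(x),u_{0}(x)+\beta\psi_{\varepsilon}(x)]$ with
\[
g(u_{0}+\beta\psi_{\varepsilon})-g(u_{0})-g'(u_{0})\beta\psi_{\varepsilon}=\tfrac{1}{2}g''(\xi)(\beta\psi_{\varepsilon})^{2}.
\]
To control $g''(\xi)=4\log\xi+6$ from above I apply Lemma~\ref{logarithmic inequality}(2) with $\delta=\tfrac{1}{2}$, obtaining $\log\xi\le\tfrac{2}{e}\xi^{1/2}$, followed by the subadditivity of the square root on $[0,\infty)$:
\[
g''(\xi)\le\tfrac{8}{e}\xi^{1/2}+6\le\tfrac{8}{e}(u_{0}+\beta\psi_{\varepsilon})^{1/2}+6\le\tfrac{8}{e}\bigl(u_{0}^{1/2}+\beta^{1/2}\psi_{\varepsilon}^{1/2}\bigr)+6.
\]
The choice $\delta=\tfrac{1}{2}$ is dictated by the target monomials $u_{0}^{1/2}\psi_{\varepsilon}^{2}$ and $\psi_{\varepsilon}^{5/2}$ appearing on the right-hand side of the claimed inequality.

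Combining these two steps produces the pointwise bound
\[
g(u_{0}+\beta\psi_{\varepsilon})-g(u_{0})-2\beta u_{0}\psi_{\varepsilon}\log u_{0}^{2}\le\tfrac{4}{e}\beta^{2}u_{0}^{1/2}\psi_{\varepsilon}^{2}+\tfrac{4}{e}\beta^{5/2}\psi_{\varepsilon}^{5/2}+3\beta^{2}\psi_{\varepsilon}^{2}+2\beta u_{0}\psi_{\varepsilon},
\]
which, upon integrating over $\Omega$, is already sharper than the stated inequality (the announced constants $\tfrac{8}{e},\tfrac{8}{e},6$ are twice the sharp ones from this calculation, presumably chosen as safe round values). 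The argument is essentially a one-line pointwise estimate plus a single appeal to Lemma~\ref{logarithmic inequality}, so there is no serious obstacle. The only subtle point is choosing the exponent $\delta=1/2$ with the downstream energy estimate for $c_{M}$ in mind, so that the powers of $u_{0}$ and $\psi_{\varepsilon}$ produced by the bound match the ones that must be absorbed later when comparing $c_{M}$ with $c_{K}+\tfrac{1}{4}\mu^{-1}S^{2}$.
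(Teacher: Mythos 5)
Your proof is correct, and it takes a somewhat different technical route from the paper. The paper first splits the difference into two pieces $I_1$ and $I_2$ (separating the $(u_0^2+\beta^2\psi_{\varepsilon}^2)$ part from the cross term $2\beta u_0\psi_{\varepsilon}$), and estimates each by writing it as $\int_0^{\beta}\frac{\partial}{\partial\xi}[\cdots]\,\mathrm{d}\xi$ and bounding the integrand with Lemma~\ref{logarithmic inequality}(2) (taken with $\delta=\frac14$ on the squared argument, which is the same bound as your $\delta=\frac12$ on the argument itself) together with $(a+b)^{1/2}\le a^{1/2}+b^{1/2}$. You instead perform a single pointwise second-order Taylor expansion of $g(t)=t^{2}\log t^{2}$ at $t=u_0(x)$ with Lagrange remainder, observe that subtracting $2\beta u_0\psi_{\varepsilon}\log u_0^{2}$ differs from subtracting $g'(u_0)\beta\psi_{\varepsilon}$ exactly by the harmless term $2\beta u_0\psi_{\varepsilon}$, and bound $g''(\xi)=4\log\xi+6$ by the same two ingredients; the hypothesis $u_0>0$ in $\Omega$ (which the paper also uses) justifies the Lagrange form, and the fact that $g''$ can be negative near zero only helps an upper bound. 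Your computation is correct and in fact gives the constants $\frac{4}{e}$, $\frac{4}{e}$, $3$ in place of $\frac{8}{e}$, $\frac{8}{e}$, $6$; since all four integrals on the right-hand side are nonnegative, this sharper bound immediately implies the stated inequality. The trade-off is essentially stylistic: your argument is more compact and yields better constants, while the paper's decomposition into $I_1$ and $I_2$ avoids invoking second derivatives and keeps each estimate at first order in the perturbation parameter; either suffices for the downstream use in Lemma~\ref{estimate4}, where only the orders in $\varepsilon$ of the resulting integrals matter.
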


\begin{proof}
Let us begin to make the following decomposition
\begin{align*}
&\int_{\Omega}(u_0+\beta \psi_{\varepsilon})^2\log(u_0+\beta \psi_{\varepsilon})^2\mathrm{d}x
-\int_{\Omega}u_0^2\log u_0^2\mathrm{d}x-\int_{\Omega}2\beta u_0 \psi_{\varepsilon}\log u_0^2\mathrm{d}x\\
=&\int_{\Omega}\left[(u_0^2+\beta^2\psi_{\varepsilon}^2)\log(u_0+\beta \psi_{\varepsilon})^2-u_0^2\log u_0^2\right]\mathrm{d}x\\
&+\int_{\Omega}\left[2\beta u_0\psi_{\varepsilon}\log(u_0+\beta \psi_{\varepsilon})^2-2\beta u_0 \psi_{\varepsilon}\log u_0^2\right]\mathrm{d}x\\
=&:I_1+I_2.
\end{align*}
Recall the following basic inequalitys:
\begin{align}\label{ineq1}
C(a^k+b^k)\leq(a+b)^k\leq a^k+b^k, \ \text{for} \ \ a,b\geq0, \ \text{and} \ k\in(0,1).
\end{align}
And for $a,b\geq0$, $k\geq1$, one has
\begin{align}\label{ineq1-2}
a^k+b^k\leq(a+b)^k\leq C(a^k+b^k).
\end{align}
Then by \eqref{log-3} with $\delta=\frac{1}{4}$ and \eqref{ineq1}, we have
\begin{align}\label{ineq2}
I_1&=\int_{\Omega}\left[(u_0^2+\beta^2\psi_{\varepsilon}^2)\log(u_0+\beta \psi_{\varepsilon})^2-u_0^2\log u_0^2\right]\mathrm{d}x\nonumber\\
&=\int_{\Omega}\int_0^{\beta}\frac{\partial}{\partial\xi}\left[(u_0^2+\xi^2\psi_{\varepsilon}^2)
\log(u_0+\xi\psi_{\varepsilon})^2\right]\mathrm{d}\xi\mathrm{d}x\nonumber\\
&=\int_{\Omega}\int_0^{\beta}2\xi\psi_{\varepsilon}^2\log(u_0+\xi\psi_{\varepsilon})^2
+\frac{2\psi_{\varepsilon}(u_0^2+\xi^2\psi_{\varepsilon}^2)}{u_0+\xi\psi_{\varepsilon}}\mathrm{d}\xi\mathrm{d}x\nonumber\\
&\leq\int_{\Omega}\int_0^{\beta}\frac{8}{e}\xi\psi_{\varepsilon}^2(u_0+\xi\psi_{\varepsilon})^{\frac{1}{2}}
+2u_0\psi_{\varepsilon}+2\xi\psi_{\varepsilon}^2\mathrm{d}\xi\mathrm{d}x\nonumber\\
&\leq\int_{\Omega}\int_0^{\beta}\frac{8}{e}\xi\psi_{\varepsilon}^2(u_0^{\frac{1}{2}}+\xi^{\frac{1}{2}}\psi_{\varepsilon}^{\frac{1}{2}})
+2u_0\psi_{\varepsilon}+2\xi\psi_{\varepsilon}^2\mathrm{d}\xi\mathrm{d}x\nonumber\\
&\leq\int_{\Omega}\frac{8}{e}\beta^2 u_0^{\frac{1}{2}}\psi_{\varepsilon}^2
+\frac{8}{e}\beta^{\frac{5}{2}}\psi_{\varepsilon}^{\frac{5}{2}}
+2\beta u_0\psi_{\varepsilon}+2\beta^2\psi_{\varepsilon}^2\mathrm{d}x.
\end{align}
As for $I_2$, we can estimate it as follows
\begin{align}\label{ineq3}
I_2&=\int_{\Omega}2\beta u_0\psi_{\varepsilon}\log(u_0+\beta \psi_{\varepsilon})^2-2\beta u_0 \psi_{\varepsilon}\log u_0^2\mathrm{d}x\nonumber\\
&=\int_{\Omega}\int_0^1\frac{\partial}{\partial\xi}\left[2\beta u_0\psi_{\varepsilon}\log(u_0
+\xi\beta \psi_{\varepsilon})^2\right]\mathrm{d}\xi\mathrm{d}x\nonumber\\
&=\int_{\Omega}\int_0^1 2\beta u_0\psi_{\varepsilon}\cdot\frac{2\beta \psi_{\varepsilon}}{u_0+\xi\beta \psi_{\varepsilon}}\mathrm{d}\xi\mathrm{d}x\nonumber\\
&\leq\int_{\Omega}4\beta^2\psi_{\varepsilon}^2\mathrm{d}x.
\end{align}
From \eqref{ineq2} and \eqref{ineq3}, we arrive at
\begin{align*}
&\int_{\Omega}(u_0+\beta \psi_{\varepsilon})^2\log(u_0+\beta \psi_{\varepsilon})^2\mathrm{d}x
-\int_{\Omega}u_0^2\log u_0^2\mathrm{d}x-\int_{\Omega}2\beta u_0 \psi_{\varepsilon}\log u_0^2\mathrm{d}x\\
\leq&\frac{8}{e}\beta^2\int_{\Omega} u_0^{\frac{1}{2}}\psi_{\varepsilon}^2\mathrm{d}x
+\frac{8}{e}\beta^{\frac{5}{2}}\int_{\Omega}\psi_{\varepsilon}^{\frac{5}{2}}\mathrm{d}x
+2\beta\int_{\Omega} u_0\psi_{\varepsilon}\mathrm{d}x
+6\beta^2\int_{\Omega}\psi_{\varepsilon}^2\mathrm{d}x.
\end{align*}
This completes the proof.
\end{proof}

\begin{lemma}\label{estimate3}
As $\varepsilon\rightarrow 0^+$, we have
\begin{align}\label{est1}
\int_{\Omega}|\psi_{\varepsilon}|^{\frac{5}{2}}\mathrm{d}x=O(\varepsilon^{\frac{3}{2}}),
\end{align}
and
\begin{align}\label{est2}
\int_{\Omega}u_0|\psi_{\varepsilon}|^{3}\mathrm{d}x\geq 8\sqrt{2}u_0(0)\varepsilon\int_{\mathbb{R}^4}\frac{1}{(1+|y|^2)^3}\mathrm{d}y
+ O(\varepsilon^{3}).
\end{align}
\end{lemma}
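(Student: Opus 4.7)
The plan is to reduce both estimates to standard rescaling computations for truncated Aubin--Talenti bubbles: the change of variables $x = \varepsilon y$ turns $\psi_\varepsilon$ into a fixed profile and isolates the $\varepsilon$-dependence as an explicit prefactor. The two bounds then follow respectively from (a) integrability of the resulting profile on $\mathbb{R}^4$, and (b) the localization $u_0 \ge \tfrac{1}{2}u_0(0)$ on $B_\varrho(0)$ guaranteed by \eqref{eq1}.

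For \eqref{est1}, since $0\le \varphi \le 1$ with $\mathrm{supp}\,\varphi \subset B_{2\varrho}(0)$, I would bound
$$\int_\Omega |\psi_\varepsilon|^{5/2}\,dx \;\le\; \int_{B_{2\varrho}(0)} u_\varepsilon^{5/2}\,dx \;=\; (2\sqrt{2})^{5/2}\,\varepsilon^{5/2}\!\int_{B_{2\varrho}(0)} \frac{dx}{(\varepsilon^2+|x|^2)^{5/2}},$$
and then substitute $x=\varepsilon y$. The Jacobian contributes $\varepsilon^{4}$ while the denominator contributes $\varepsilon^{-5}$, yielding an overall prefactor of $\varepsilon^{3/2}$ multiplying $\int_{B_{2\varrho/\varepsilon}(0)}(1+|y|^2)^{-5/2}\,dy$. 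Since $5>4$, this integral is uniformly bounded in $\varepsilon$ by $\int_{\mathbb{R}^4}(1+|y|^2)^{-5/2}\,dy <\infty$, which gives the claimed $O(\varepsilon^{3/2})$.

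For \eqref{est2}, the extra ingredient is the localization from \eqref{eq1}: on $B_\varrho(0)$ one has $\varphi \equiv 1$ and $u_0 \ge m_\varrho \ge \tfrac{1}{2}u_0(0)$, so dropping the integral outside $B_\varrho(0)$ gives
$$\int_\Omega u_0\,|\psi_\varepsilon|^3\,dx \;\ge\; \tfrac{1}{2}u_0(0)\int_{B_\varrho(0)} u_\varepsilon^3\,dx.$$
The same rescaling $x=\varepsilon y$ converts the remaining integral to $16\sqrt{2}\,\varepsilon\!\int_{B_{\varrho/\varepsilon}(0)}(1+|y|^2)^{-3}\,dy$, which together with the factor $\tfrac{1}{2}u_0(0)$ matches the announced leading term $8\sqrt{2}\,u_0(0)\,\varepsilon\int_{\mathbb{R}^4}(1+|y|^2)^{-3}\,dy$. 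It remains to pass from $B_{\varrho/\varepsilon}(0)$ to all of $\mathbb{R}^4$: for $|y|\ge \varrho/\varepsilon$ one has $(1+|y|^2)^{-3}\lesssim |y|^{-6}$, and integrating in radial coordinates on $\mathbb{R}^4$ gives a tail of order $\int_{\varrho/\varepsilon}^{\infty} r^{-3}\,dr = O(\varepsilon^{2})$; multiplied by the $\varepsilon$ prefactor this contributes an error of size $O(\varepsilon^3)$, as claimed.

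I do not anticipate a genuine obstacle: both statements are routine scaling computations. The only thing to monitor is that the tail estimate in \eqref{est2} indeed produces $O(\varepsilon^3)$ rather than a larger error which would be insufficient later when comparing $c_M$ with $c_K+\tfrac{1}{4}\mu^{-1}S^2$. This is the reason one exploits the crude lower bound $u_0 \ge \tfrac{1}{2}u_0(0)$ from \eqref{eq1} rather than attempting a finer Taylor expansion of $u_0$ around the origin, since the latter would introduce additional $\varepsilon$-dependent terms that are unnecessary at this level of precision.
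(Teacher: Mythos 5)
Your proposal is correct and follows essentially the same route as the paper: the substitution $x=\varepsilon y$ combined with the lower bound $u_0\ge \tfrac{1}{2}u_0(0)$ on $B_\varrho(0)$ from \eqref{eq1}, with the tail over $|y|\ge \varrho/\varepsilon$ contributing $O(\varepsilon^{3})$ in \eqref{est2}, exactly as in the paper's computation. The only (harmless) difference is in \eqref{est1}, where you bound the dilated integral directly by the convergent integral over all of $\mathbb{R}^4$ instead of splitting off the annulus and tail terms as the paper does; since the integral is nonnegative and only an upper bound of order $\varepsilon^{3/2}$ is needed, this shortcut is perfectly adequate.
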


\begin{proof}
From the definition of $\psi_{\varepsilon}$, we have
\begin{align}\label{1}
\int_{\Omega}|\psi_{\varepsilon}|^{\frac{5}{2}}\mathrm{d}x
=&2^{\frac{15}{4}}\int_{B_{2\varrho}(0)}{\varphi^{\frac{5}{2}}(x)}\frac{\varepsilon^{\frac{5}{2}}}{(\varepsilon^2
+|x|^2)^{\frac{5}{2}}}\mathrm{d}x\nonumber\\
=&2^{\frac{15}{4}}\int_{B_{\varrho}(0)}\frac{\varepsilon^{\frac{5}{2}}}{(\varepsilon^2+|x|^2)^{\frac{5}{2}}}\mathrm{d}x
+2^{\frac{15}{4}}\int_{B_{2\varrho}(0)\backslash B_{\varrho}(0)}
{\varphi^{\frac{5}{2}}(x)}\frac{\varepsilon^{\frac{5}{2}}}{(\varepsilon^2+|x|^2)^{\frac{5}{2}}}\mathrm{d}x\nonumber\\
=&:J_1+J_2.
\end{align}
Direct calculation guarantees that
\begin{align}\label{2}
J_1=&2^{\frac{15}{4}}\int_{B_{\varrho}(0)}\frac{\varepsilon^{\frac{5}{2}}}{(\varepsilon^2+|x|^2)^{\frac{5}{2}}}\mathrm{d}x\nonumber\\
=&2^{\frac{15}{4}}\varepsilon^{\frac{3}{2}}\int_{B_{\varrho/\varepsilon}(0)}\frac{1}{(1+|y|^2)^{\frac{5}{2}}}\mathrm{d}y\nonumber\\
=&2^{\frac{15}{4}}\varepsilon^{\frac{3}{2}}\int_{\mathbb{R}^4}\frac{1}{(1+|y|^2)^{\frac{5}{2}}}\mathrm{d}y
-2^{\frac{15}{4}}\varepsilon^{\frac{3}{2}}\int_{B_{\varrho/\varepsilon}^{c}(0)}\frac{1}{(1+|y|^2)^{\frac{5}{2}}}\mathrm{d}y\nonumber\\
=&:J_{11}-J_{12}.
\end{align}
As $\varepsilon\rightarrow 0^+$,
\begin{align}\label{3}
J_{12}=&2^{\frac{15}{4}}\varepsilon^{\frac{3}{2}}\int_{B_{\varrho/\varepsilon}^{c}(0)}\frac{1}{(1+|y|^2)^{\frac{5}{2}}}\mathrm{d}y\nonumber\\
=&2^{\frac{15}{4}}\omega_4\varepsilon^{\frac{3}{2}}\int_{\frac{\varrho}{\varepsilon}}^{+\infty}\frac{r^3}{(1+r^2)^{\frac{5}{2}}}\mathrm{d}r\nonumber\\
=&2^{\frac{15}{4}}\omega_4\varepsilon^{\frac{3}{2}}\left(-\frac{1}{\sqrt{1+r^2}}\bigg|_{\frac{\varrho}{\varepsilon}}^{+\infty}
+\frac{1}{3(1+r^2)^{\frac{3}{2}}}\bigg|_{\frac{\varrho}{\varepsilon}}^{+\infty}\right)\nonumber\\
=&2^{\frac{15}{4}}\omega_4\varepsilon^{\frac{3}{2}}\left(\frac{\varepsilon}{(\varepsilon^2+\varrho^2)^{\frac{1}{2}}}
-\frac{\varepsilon^{3}}{3(\varepsilon^2+\varrho^2)^{\frac{3}{2}}}\right)\nonumber\\
=&O(\varepsilon^{\frac{5}{2}}).
\end{align}
Now we estimate $J_2$. As $\varepsilon\rightarrow 0^+$, one has
\begin{align}\label{4}
J_2=&2^{\frac{15}{4}}\int_{B_{2\varrho}(0)\backslash B_{\varrho}(0)}
{\varphi^{\frac{5}{2}}(x)}\frac{\varepsilon^{\frac{5}{2}}}{(\varepsilon^2+|x|^2)^{\frac{5}{2}}}\mathrm{d}x\nonumber\\
\leq&2^{\frac{15}{4}}\int_{B_{2\varrho}(0)\backslash B_{\varrho}(0)}\frac{\varepsilon^{\frac{5}{2}}}{(\varepsilon^2+|x|^2)^{\frac{5}{2}}}\mathrm{d}x\nonumber\\
\leq&2^{\frac{15}{4}}\varepsilon^{\frac{5}{2}}\int_{B_{2\varrho}(0)\backslash B_{\varrho}(0)}\frac{1}{|x|^5}\mathrm{d}x\nonumber\\
=&O(\varepsilon^{\frac{5}{2}}).
\end{align}
Then \eqref{est1} follows from \eqref{1}, \eqref{2}, \eqref{3} and \eqref{4}.

Next we prove \eqref{est2}. Notice that $u_0(x)>0$ in $\Omega$. Then by the definition of $\psi_{\varepsilon}$ and \eqref{eq1},
we have
\begin{align}\label{ineq4}
\int_{\Omega}u_0|\psi_{\varepsilon}|^{3}\mathrm{d}x
=&16\sqrt{2}\int_{B_{2\varrho}(0)}u_0(x)\varphi^3(x)\frac{\varepsilon^3}{(\varepsilon^2+|x|^2)^3}\mathrm{d}x\nonumber\\
=&16\sqrt{2}\left(\int_{B_{\varrho}(0)}u_0(x)\frac{\varepsilon^3}{(\varepsilon^2+|x|^2)^3}\mathrm{d}x
+\int_{B_{2\varrho}(0)\backslash B_{\varrho}(0)}u_0(x)\varphi^3(x)\frac{\varepsilon^3}{(\varepsilon^2+|x|^2)^3}\mathrm{d}x\right)\nonumber\\
\geq&16\sqrt{2}\int_{B_{\varrho}(0)}u_0(x)\frac{\varepsilon^3}{(\varepsilon^2+|x|^2)^3}\mathrm{d}x\nonumber\\
\geq&8\sqrt{2}u_0(0)\int_{B_{\varrho}(0)}\frac{\varepsilon^3}{(\varepsilon^2+|x|^2)^3}\mathrm{d}x\nonumber\\
=&8\sqrt{2}u_0(0)\varepsilon\int_{B_{\varrho/\varepsilon}(0)}\frac{1}{(1+|y|^2)^3}\mathrm{d}y\nonumber\\
=&8\sqrt{2}u_0(0)\varepsilon\int_{\mathbb{R}^4}\frac{1}{(1+|y|^2)^3}\mathrm{d}y
-8\sqrt{2}u_0(0)\varepsilon\int_{B_{\varrho/\varepsilon}^{c}(0)}\frac{1}{(1+|y|^2)^3}\mathrm{d}y\nonumber\\
=&:K_1-K_2.
\end{align}
The first term $K_1$ is what we need and the second term $K_2$ can be estimated as follows:
\begin{align}\label{eq3}
K_2=&8\sqrt{2}u_0(0)\varepsilon\int_{B_{\varrho/\varepsilon}^{c}(0)}\frac{1}{(1+|y|^2)^3}\mathrm{d}y\nonumber\\
=&8\sqrt{2}u_0(0)\omega_4\varepsilon\int_{\frac{\varrho}{\varepsilon}}^{+\infty}\frac{r^3}{(1+r^2)^3}\mathrm{d}r\nonumber\\
=&8\sqrt{2}u_0(0)\omega_4\varepsilon\left(-\frac{1}{2(1+r^2)}\bigg|_{\frac{\varrho}{\varepsilon}}^{+\infty}
+\frac{1}{4(1+r^2)^2}\bigg|_{\frac{\varrho}{\varepsilon}}^{+\infty}\right)\nonumber\\
=&8\sqrt{2}u_0(0)\omega_4\varepsilon\left(\frac{\varepsilon^2}{2(\varepsilon^2+\varrho^2)}
-\frac{\varepsilon^4}{4(\varepsilon^2+\varrho^2)^2}\right)\nonumber\\
=&O(\varepsilon^3),\qquad\text{as}\ \varepsilon\rightarrow 0^+.
\end{align}
Hence, \eqref{est2} follows from \eqref{ineq4} and \eqref{eq3}. The proof is complete.
\end{proof}

With Lemmas \ref{estimate1}-\ref{estimate3} at hand, we can now control the mountain pass level around $u_0$ from above
by the desired constant. More precisely, we have the following

\begin{lemma}\label{estimate4}
Assume that $(\lambda,\mu,\theta)\in A_1\cup A_2$. Then for suitably small $\varepsilon>0$ we have
\begin{align}\label{key}
\sup_{\beta\geq0}J(u_0+\beta \psi_{\varepsilon})<c_{K}+\frac{1}{4}\mu^{-1}S^2.
\end{align}
\end{lemma}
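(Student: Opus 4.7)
The plan is to take $\widetilde v=\psi_\varepsilon$, expand $J(u_0+\beta\psi_\varepsilon)$ explicitly in $\beta$, and show the supremum over $\beta\ge 0$ equals $J(u_0)+\tfrac{1}{4}\mu^{-1}S^2$ minus a strictly negative contribution of order $\varepsilon$. Since $c_K=J(u_0)$ by Lemma \ref{gss}, this is exactly \eqref{key}. The leading terms will come from Lemma \ref{estimate1}, the crucial negative $O(\varepsilon)$ correction from the cross integral $\mu\beta^3\!\int_\Omega u_0\psi_\varepsilon^3\,dx$ bounded below in Lemma \ref{estimate3}, and the remaining errors from Lemma \ref{estimate2} and Lemma \ref{estimate1}.

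The algebraic step expands the $\|\nabla\cdot\|_2^2$, $\|u\|_2^2$ and $\|u\|_4^4$ contributions polynomially, writes $-\tfrac{\theta}{2}\!\int u^2(\log u^2-1)=-\tfrac{\theta}{2}\!\int u^2\log u^2+\tfrac{\theta}{2}\!\int u^2$, and applies Lemma \ref{estimate2} (valid since $-\theta/2>0$) to the first summand. A direct check shows that, after the automatic cancellation of $\pm\theta\beta\!\int_\Omega u_0\psi_\varepsilon$ between the $\tfrac{\theta}{2}\!\int u^2$ piece and the linear part of the Lemma \ref{estimate2} remainder, the linear-in-$\beta$ contribution of the resulting upper bound reduces to
\[
\beta\Bigl(\int_\Omega \nabla u_0\nabla\psi_\varepsilon-\lambda\!\int_\Omega u_0\psi_\varepsilon-\mu\!\int_\Omega u_0^3\psi_\varepsilon-\theta\!\int_\Omega u_0\psi_\varepsilon\log u_0^2\Bigr)=0
\]
by the weak equation \eqref{equality3} with test function $v=\psi_\varepsilon$. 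Dropping the nonpositive term $-\tfrac{3\mu\beta^2}{2}\!\int u_0^2\psi_\varepsilon^2$ and bounding $\int u_0^{1/2}\psi_\varepsilon^2\le\|u_0\|_\infty^{1/2}\|\psi_\varepsilon\|_2^2$ and $\int u_0\psi_\varepsilon\le\|u_0\|_\infty\int|\psi_\varepsilon|$, the upper bound becomes
\[
J(u_0+\beta\psi_\varepsilon)-J(u_0)\ \le\ \tfrac{\beta^2}{2}\|\nabla\psi_\varepsilon\|_2^2-\tfrac{\mu\beta^4}{4}\|\psi_\varepsilon\|_4^4-\mu\beta^3\!\int_\Omega u_0\psi_\varepsilon^3+E(\beta,\varepsilon),
\]
with $E(\beta,\varepsilon)=O(\beta^2\varepsilon^2|\log\varepsilon|)+O(\beta^{5/2}\varepsilon^{3/2})$ uniformly on bounded $\beta$-intervals.

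Inserting $\|\nabla\psi_\varepsilon\|_2^2=S^2+O(\varepsilon^2)$ and $\|\psi_\varepsilon\|_4^4=S^2+O(\varepsilon^4)$ from Lemma \ref{estimate1} and $\int_\Omega u_0\psi_\varepsilon^3\ge C_0\varepsilon+O(\varepsilon^3)$ from Lemma \ref{estimate3}, with $C_0:=8\sqrt{2}\,u_0(0)\!\int_{\mathbb R^4}(1+|y|^2)^{-3}dy>0$, one obtains
\[
J(u_0+\beta\psi_\varepsilon)-J(u_0)\ \le\ g(\beta)-C_0\mu\beta^3\varepsilon+\widetilde E(\beta,\varepsilon),\qquad g(\beta):=\tfrac{\beta^2}{2}S^2-\tfrac{\mu\beta^4}{4}S^2,
\]
with $\widetilde E(\beta,\varepsilon)=O(\varepsilon^{3/2})$ on any fixed compact subset of $[0,\infty)$. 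Since the leading polynomial tends to $-\infty$ as $\beta\to\infty$, the supremum is attained on some fixed $[0,M]$ for all small $\varepsilon$. Split $[0,M]$ at $\beta_0=\tfrac12\mu^{-1/2}$: on $[0,\beta_0]$ one has $g(\beta)\le g(\beta_0)=\tfrac{7}{64}\mu^{-1}S^2$, a fixed gap $\tfrac{9}{64}\mu^{-1}S^2$ below $\sup g=\tfrac14\mu^{-1}S^2$ that dwarfs $\widetilde E$; on $[\beta_0,M]$ the bound $-C_0\mu\beta^3\varepsilon\le -C_0\mu\beta_0^3\varepsilon$ gives a strict $O(\varepsilon)$ negative contribution that dominates the $O(\varepsilon^{3/2})$ positive residual for $\varepsilon$ small. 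In either range $\sup_{\beta\ge 0}J(u_0+\beta\psi_\varepsilon)<J(u_0)+\tfrac14\mu^{-1}S^2=c_K+\tfrac14\mu^{-1}S^2$.

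The main obstacle is certifying that the negative $O(\varepsilon)$ correction truly appears with a strictly positive constant and beats every positive correction produced by the logarithmic perturbation and by the truncation of the Aubin--Talenti bubble. This is precisely the role of Lemma \ref{estimate3}: using $u_0(0)>0$ together with the continuity-based lower bound \eqref{eq1}, $\int_\Omega u_0\psi_\varepsilon^3$ has a strictly positive leading coefficient at order $\varepsilon$, while the only $O(\varepsilon^{3/2})$ positive correction, arising from $\int_\Omega\psi_\varepsilon^{5/2}=O(\varepsilon^{3/2})$ in the same lemma (via the $\tfrac{8}{e}\beta^{5/2}\int\psi_\varepsilon^{5/2}$ term of Lemma \ref{estimate2}), is strictly of higher order in $\varepsilon$.
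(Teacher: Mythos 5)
Your proposal is correct, and the core of it coincides with the paper's proof: the same expansion of $J(u_0+\beta\psi_\varepsilon)$, the same use of Lemma \ref{estimate2} (legitimate since $-\theta/2>0$), the same cancellation of the linear-in-$\beta$ terms through \eqref{equality3}, the same inputs from Lemmas \ref{estimate1} and \ref{estimate3}, and the identification $J(u_0)=c_K$ from Lemma \ref{gss}. Where you genuinely diverge is in how the maximizing $\beta$ is controlled. The paper works with the maximizer $\beta_\varepsilon$ itself and spends half the proof establishing $0<C_0\le\beta_\varepsilon\le C^0$ uniformly in small $\varepsilon$, using the first-order condition $J_\beta'(u_0+\beta_\varepsilon\psi_\varepsilon)=0$ and the second-order condition $J_\beta''(u_0+\beta_\varepsilon\psi_\varepsilon)\le 0$; the lower bound $\beta_\varepsilon\ge C_0$ is exactly what makes the term $-\mu\beta_\varepsilon^3\int_\Omega u_0\psi_\varepsilon^3\,\mathrm{d}x\le -C\varepsilon$ effective against the $O(\varepsilon^{3/2})$ and $O(\varepsilon^2|\log\varepsilon|)$ residuals. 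You bypass any analysis of $\beta_\varepsilon$ by splitting $[0,\infty)$ at $\beta_0=\tfrac12\mu^{-1/2}$: on $[0,\beta_0]$ you exploit the fixed gap $g(\beta)\le\tfrac{7}{64}\mu^{-1}S^2<\tfrac14\mu^{-1}S^2$, and on $[\beta_0,M]$ the cubic term supplies the uniform $-C_0'\mu\beta_0^3\varepsilon$ correction. Your route is more elementary (no second-derivative computation, no maximizer bounds), while the paper's yields quantitative information on $\beta_\varepsilon$ that it then plugs directly into its final display; both rest on the same three estimates. One small point to tighten: your reduction to a fixed compact interval $[0,M]$ should not be inferred from error bounds stated only ``uniformly on bounded $\beta$-intervals''. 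Either observe that your error terms carry explicit powers of $\beta$ with $\varepsilon$-independent constants (so the upper bound holds for all $\beta\ge 0$ and its negative quartic part forces the bound below $\tfrac14\mu^{-1}S^2$ for $\beta\ge M$), or give a crude separate estimate of $J(u_0+\beta\psi_\varepsilon)$ for large $\beta$ using $\|\psi_\varepsilon\|_4^4\ge \tfrac12 S^2$; either patch is immediate.
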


\begin{proof}
By Lemma \ref{MPG} and the fact that $J(u_0+\beta \psi_{\varepsilon})\rightarrow-\infty$ as $\beta\rightarrow+\infty$,
we know that there exists a constant $\beta_{\varepsilon}>0$ such that
\begin{align}\label{ineqt6}
\sup_{\beta\geq0}J(u_0+\beta \psi_{\varepsilon})=J(u_0+\beta_{\varepsilon} \psi_{\varepsilon}).
\end{align}

First we claim that there exist constants $C_0, C^0>0$ such that
\begin{align}\label{ineqt7}
C_0 \leq\beta_{\varepsilon}\leq C^0
\end{align}
uniformly for suitably small $\varepsilon$. Indeed, by \eqref{ineqt6}, we have
$$J_{\beta}'(u_0+\beta_{\varepsilon} \psi_{\varepsilon})=0, \ \ \text{and} \ \ \ J_{\beta}''(u_0+\beta_{\varepsilon} \psi_{\varepsilon})\leq0,$$
which then imply
\begin{align}\label{eq5}
&\int_{\Omega}\left(\nabla u_0\nabla\psi_{\varepsilon}-\lambda u_0\psi_{\varepsilon}\right)\mathrm{d}x
+\beta_{\varepsilon}\int_{\Omega}\left(|\nabla\psi_{\varepsilon}|^2-\lambda |\psi_{\varepsilon}|^2\right)\mathrm{d}x\nonumber\\
=&\mu\int_{\Omega}(u_0+\beta_{\varepsilon}\psi_{\varepsilon})^3\psi_{\varepsilon}\mathrm{d}x
+\theta\int_{\Omega}(u_0+\beta_{\varepsilon}\psi_{\varepsilon})\psi_{\varepsilon}\log(u_0+\beta_{\varepsilon}\psi_{\varepsilon})^2\mathrm{d}x,
\end{align}
and
\begin{align}\label{eq6}
\int_{\Omega}|\nabla\psi_{\varepsilon}|^2-(\lambda+2\theta)|\psi_{\varepsilon}|^2\mathrm{d}x
-3\mu\int_{\Omega}|\psi_{\varepsilon}|^2(u_0+\beta_{\varepsilon}\psi_{\varepsilon})^2\mathrm{d}x
-\theta\int_{\Omega}|\psi_{\varepsilon}|^2\log(u_0+\beta_{\varepsilon}\psi_{\varepsilon})^2\mathrm{d}x\leq0.
\end{align}
Since $\mu>0$ and $\theta<0$, using \eqref{log-3} with $\delta=\frac{1}{2}$ and \eqref{ineq1-2}, we deduce from \eqref{eq5} that
\begin{align*}
&\int_{\Omega}\left(\nabla u_0\nabla\psi_{\varepsilon}-\lambda u_0\psi_{\varepsilon}\right)\mathrm{d}x
+\beta_{\varepsilon}\int_{\Omega}\left(|\nabla\psi_{\varepsilon}|^2-\lambda |\psi_{\varepsilon}|^2\right)\mathrm{d}x\nonumber\\
\geq&\mu\int_{\Omega}u_0^3\psi_{\varepsilon}\mathrm{d}x+\mu\beta_{\varepsilon}^3\int_{\Omega}|\psi_{\varepsilon}|^4\mathrm{d}x
+\frac{2\theta}{e}\int_{\Omega}(u_0+\beta_{\varepsilon}\psi_{\varepsilon})^2\psi_{\varepsilon}\mathrm{d}x\nonumber\\
\geq&\mu\int_{\Omega}u_0^3\psi_{\varepsilon}\mathrm{d}x+\mu\beta_{\varepsilon}^3\int_{\Omega}|\psi_{\varepsilon}|^4\mathrm{d}x
+\frac{4\theta}{e}\int_{\Omega}u_0^2\psi_{\varepsilon}\mathrm{d}x+\frac{4\theta}{e}\beta_{\varepsilon}^2\int_{\Omega}|\psi_{\varepsilon}|^3\mathrm{d}x,
\end{align*}
which, together with \eqref{equality3}, shows that
\begin{align*}
&\beta_{\varepsilon}\int_{\Omega}\left(|\nabla\psi_{\varepsilon}|^2-\lambda |\psi_{\varepsilon}|^2\right)\mathrm{d}x
+\theta\int_{\Omega}u_0\psi_{\varepsilon}\log u_0^2\mathrm{d}x-\frac{4\theta}{e}\int_{\Omega}u_0^2\psi_{\varepsilon}\mathrm{d}x\\
\geq&\mu\beta_{\varepsilon}^3\int_{\Omega}|\psi_{\varepsilon}|^4\mathrm{d}x
+\frac{4\theta}{e}\beta_{\varepsilon}^2\int_{\Omega}|\psi_{\varepsilon}|^3\mathrm{d}x.
\end{align*}
Recalling that $u_0\in L^{\infty}(\Omega)$, $\mu>0$ and $\theta<0$, combining the above inequality with Lemma \ref{estimate1},
one has, for suitably small $\varepsilon$,
\begin{align*}
C_1+2S^2\beta_{\varepsilon}\geq \frac{\mu}{2}S^2\beta_{\varepsilon}^3-C_2\beta_{\varepsilon}^2,
\end{align*}
where $C_1$ and $C_2$ are positive constants. Consequently, there exists $C^0>0$ such that $\beta_{\varepsilon}\leq C^0$ uniformly for suitably small $\varepsilon$.

Similarly, using \eqref{ineq1-2}, we deduce from \eqref{eq6} that
\begin{align*}
\int_{\Omega}|\nabla\psi_{\varepsilon}|^2-(\lambda+2\theta)|\psi_{\varepsilon}|^2\mathrm{d}x
-6\mu\int_{\Omega}u_0^2|\psi_{\varepsilon}|^2\mathrm{d}x
-\theta\int_{\Omega}|\psi_{\varepsilon}|^2\log {u_0^2}\mathrm{d}x
\leq 6\mu\beta_{\varepsilon}^2\int_{\Omega}|\psi_{\varepsilon}|^4\mathrm{d}x.
\end{align*}
Applying $u_0\in L^{\infty}(\Omega)$ and Lemma \ref{estimate1} again, one obtains, for suitably small $\varepsilon$,
\begin{align*}
\frac{1}{2}S^2 \leq12\mu S^2\beta_{\varepsilon}^2,
\end{align*}
which implies that there exists $C_0>0$ such that $\beta_{\varepsilon}\geq C_0$ uniformly for suitably small $\varepsilon$.

On the basis of \eqref{ineqt7}, we can estimate $J(u_0+\beta_{\varepsilon} \psi_{\varepsilon})$ as follows
\begin{align*}
J(u_0+\beta_{\varepsilon} \psi_{\varepsilon})=&\frac{1}{2}\int_{\Omega}\left(|\nabla u_0|^2-\lambda|u_0|^2+\theta|u_0|^2\right)\mathrm{d}x
+\frac{\beta_{\varepsilon}^2}{2}\int_{\Omega}\left(|\nabla \psi_{\varepsilon}|^2-\lambda|\psi_{\varepsilon}|^2
+\theta|\psi_{\varepsilon}|^2\right)\mathrm{d}x\nonumber\\
&+\beta_{\varepsilon}\int_{\Omega}\nabla u_0\nabla{\psi_{\varepsilon}}\mathrm{d}x
-\lambda\beta_{\varepsilon}\int_{\Omega}u_0\psi_{\varepsilon}\mathrm{d}x
+\theta\beta_{\varepsilon}\int_{\Omega}u_0\psi_{\varepsilon}\mathrm{d}x
-\frac{\mu}{4}\int_{\Omega}|u_0|^4\mathrm{d}x\nonumber\\
&-\mu\beta_{\varepsilon}\int_{\Omega}u_0^3\psi_{\varepsilon}\mathrm{d}x
-\frac{3\mu}{2}\beta_{\varepsilon}^2\int_{\Omega}u_0^2|\psi_{\varepsilon}|^2\mathrm{d}x
-\mu\beta_{\varepsilon}^3\int_{\Omega}u_0|\psi_{\varepsilon}|^3\mathrm{d}x
-\frac{\mu\beta_{\varepsilon}^4}{4}\int_{\Omega}|\psi_{\varepsilon}|^4\mathrm{d}x\nonumber\\
&-\frac{\theta}{2}\int_{\Omega}(u_0+\beta_{\varepsilon} \psi_{\varepsilon})^2\log{(u_0+\beta_{\varepsilon} \psi_{\varepsilon})^2}\mathrm{d}x.
\end{align*}
Then by Lemma \ref{estimate2} and \eqref{equality3}, we further obtain
\begin{align}\label{inequt8}
J(u_0+\beta \psi_{\varepsilon})\leq&\frac{1}{2}\int_{\Omega}\left(|\nabla u_0|^2-\lambda|u_0|^2+\theta|u_0|^2\right)\mathrm{d}x
+\frac{\beta_{\varepsilon}^2}{2}\int_{\Omega}\left(|\nabla \psi_{\varepsilon}|^2
-\lambda|\psi_{\varepsilon}|^2+\theta|\psi_{\varepsilon}|^2\right)\mathrm{d}x\nonumber\\
&+\beta_{\varepsilon}\int_{\Omega}\nabla u_0\nabla{\psi_{\varepsilon}}\mathrm{d}x
-\lambda\beta_{\varepsilon}\int_{\Omega}u_0\psi_{\varepsilon}\mathrm{d}x
-\frac{\mu}{4}\int_{\Omega}|u_0|^4\mathrm{d}x
-\mu\beta_{\varepsilon}\int_{\Omega}u_0^3\psi_{\varepsilon}\mathrm{d}x\nonumber\\
&-\frac{3\mu}{2}\beta_{\varepsilon}^2\int_{\Omega}u_0^2|\psi_{\varepsilon}|^2\mathrm{d}x
-\mu\beta_{\varepsilon}^3\int_{\Omega}u_0|\psi_{\varepsilon}|^3\mathrm{d}x
-\frac{\mu\beta_{\varepsilon}^4}{4}\int_{\Omega}|\psi_{\varepsilon}|^4\mathrm{d}x\nonumber\\
&-\frac{\theta}{2}\int_{\Omega}u_0^2\log u_0^2\mathrm{d}x
-\theta\beta_{\varepsilon}\int_{\Omega}u_0 \psi_{\varepsilon}\log u_0^2\mathrm{d}x
-\frac{4\theta}{e}\beta_{\varepsilon}^2\int_{\Omega}u_0^{\frac{1}{2}}|\psi_{\varepsilon}|^2\mathrm{d}x\nonumber\\
&-\frac{4\theta}{e}\beta_{\varepsilon}^{\frac{5}{2}}\int_{\Omega}|\psi_{\varepsilon}|^{\frac{5}{2}}\mathrm{d}x
-3\theta\beta_{\varepsilon}^2\int_{\Omega}|\psi_{\varepsilon}|^2\mathrm{d}x\nonumber\\
=&J(u_0)+\frac{\beta_{\varepsilon}^2}{2}\int_{\Omega}|\nabla \psi_{\varepsilon}|^2\mathrm{d}x
-\frac{\mu\beta_{\varepsilon}^4}{4}\int_{\Omega}|\psi_{\varepsilon}|^4\mathrm{d}x
-\frac{\lambda+5\theta}{2}\beta_{\varepsilon}^2\int_{\Omega}|\psi_{\varepsilon}|^2\mathrm{d}x\nonumber\\
&-\frac{3\mu}{2}\beta_{\varepsilon}^2\int_{\Omega}u_0^2|\psi_{\varepsilon}|^2\mathrm{d}x
-\mu\beta_{\varepsilon}^3\int_{\Omega}u_0|\psi_{\varepsilon}|^3\mathrm{d}x
-\frac{4\theta}{e}\beta_{\varepsilon}^2\int_{\Omega}u_0^{\frac{1}{2}}|\psi_{\varepsilon}|^2\mathrm{d}x\nonumber\\
&-\frac{4\theta}{e}\beta_{\varepsilon}^{\frac{5}{2}}\int_{\Omega}|\psi_{\varepsilon}|^{\frac{5}{2}}\mathrm{d}x.
\end{align}
Set $g(t)=\frac{t^2}{2}-\frac{\mu t^4}{4}$, $t>0$. Then $g'(t)>0$ for $0<t<\mu^{-\frac{1}{2}}$,
$g'(t)<0$ for $t>\mu^{-\frac{1}{2}}$ and $g(t)$ attains its maximum at $t=\mu^{-\frac{1}{2}}$ with
\begin{align}\label{66}
\max_{t>0}g(t)=g(\mu^{-\frac{1}{2}})=\frac{1}{4}\mu^{-1}.
\end{align}

By the boundedness of $u_0$ and $\beta_{\varepsilon}$, and applying Lemmas \ref{gss}, \ref{estimate1} and \ref{estimate3},
we obtain from \eqref{inequt8} and \eqref{66} that, as $\varepsilon\rightarrow0^+$,
\begin{align*}
J(u_0+\beta_{\varepsilon} \psi_{\varepsilon})\leq&J(u_0)+\frac{\beta_{\varepsilon}^2}{2}S^2
-\frac{\mu\beta_{\varepsilon}^4}{4}S^2+O(\varepsilon^2)+O(\varepsilon^2|\log \varepsilon|)+O(\varepsilon^{\frac{3}{2}})\\
&-8\sqrt{2}\mu{C_0}^3u_0(0)\varepsilon\int_{\mathbb{R}^N}\frac{1}{(1+|y|^2)^3}\mathrm{d}y+O(\varepsilon^3)\\
<&c_{K}+\frac{1}{4}\mu^{-1}S^2.
\end{align*}
Therefore, \eqref{key} holds with suitably small $\varepsilon>0$ and the proof is complete.
\end{proof}

\section{Proof of Theorem 1.1}

{\bf Proof of Theorem \ref{th1.1}.}
Assume that $(\lambda, \mu, \theta)\in A_1\cup A_2$. From Lemma \ref{MPG} and the Mountain Pass Theorem (Theorem 1.17 in\cite{Willem}), there exists
a sequence $\{u_n\}\subset H_0^1(\Omega)$ such that, as $n\rightarrow\infty$,
$$J(u_n)\rightarrow c_{M} \ \ \ and \ \ \ J'(u_n)\rightarrow 0 \ \  in \ \ H^{-1}(\Omega).$$
This, together with Lemmas \ref{PS-condition}, \ref{estimate4} and \eqref{ineqt5}, implies that
there exists $u\in H_0^1(\Omega)$ such that $u_n\rightarrow u$ in $H_0^1(\Omega)$ and
$$J(u)=c_{M}, \ \ \ \ \ \ \ \  J'(u)=0.$$
Consequently, $\langle J'(u), u^{-}\rangle=\int_{\Omega}|\nabla u^{-}|^2\mathrm{d}x=0$,
which means that $u\geq0$ in $\Omega$. Therefore, $u$ is a nonnegative mountain pass solution to problem \eqref{P1}.
By Morse's iteration, the solution $u$ belong to $L^{\infty}(\Omega)$. Then the H\"{o}lder estimate implies that $u\in C^{0,\gamma}(\Omega)$
for any $0<\gamma<1$. Define $h:[0,+\infty)\rightarrow \mathbb{R}$ by
\begin{align*}
h(s)=
\begin{cases}
\frac{3|\theta|}{2}|s\log s^2|,  &s>0,\\
0, &s=0.
\end{cases}
\end{align*}
Then we follow the arguments in \cite{Dengyinbin, Vazquez1984} and get that $u\in C^2(\Omega)$ and $u>0$ in $\Omega$. This completes the proof.

\section{Appendix}

In this section, we shall show that the conjecture proposed in \cite{Hajaiej} is also valid when $N=3,5$.
Since most of the proof is similar to the case $N=4$, we only sketch the outline here.
To show this, consider the following problem
\begin{eqnarray}\label{P2}
\begin{cases}
-\Delta u= \lambda u+\mu|u|^{2^*-2}u+\theta u\log u^2, &x\in\Omega,\\
u=0, &x\in\partial\Omega,
\end{cases}
\end{eqnarray}
where $\Omega\subset \mathbb{R}^N(N=3,5)$ is a bounded domain with smooth boundary
$\partial\Omega$, $\lambda\in \mathbb{R}$, $\mu>0$, $\theta<0$ and $2^{*}=\frac{2N}{N-2}$ is the Sobolev critical exponent.

The modified energy functional associated with problem \eqref{P2} is defined by
\begin{eqnarray*}
\widetilde{J}(u)=\frac{1}{2}\int_{\Omega}|\nabla u|^2\mathrm{d}x-\frac{\lambda}{2}\int_{\Omega}|u^+|^2\mathrm{d}x-\frac{\mu}{2^*}\int_{\Omega}|u^+|^{2^*}\mathrm{d}x
-\frac{\theta}{2}\int_{\Omega}(u^+)^2\left(\log(u^+)^2-1\right)\mathrm{d}x.
\end{eqnarray*}

Set
\begin{align*}
A_3:&=\left\{(\lambda, \mu, \theta): \lambda\in \left[0, \lambda_1(\Omega)\right), \mu>0, \theta<0,
 \left(\frac{\lambda_{1}(\Omega)-\lambda}{\lambda_{1}(\Omega)}\right)^{\frac{N}{2}}\mu^{-\frac{N-2}{2}}S^{\frac{N}{2}}
 +\theta e^{\frac{N-2}{2}}|\Omega|\geq0\right\},\\
A_4:&=\left\{(\lambda, \mu, \theta): \lambda\in \mathbb{R}, \mu>0, \theta<0,
 \mu^{-\frac{N-2}{2}}S^{\frac{N}{2}}+\theta e^{\frac{N-2}{2}-\frac{\lambda}{\theta}}|\Omega|\geq0\right\}.
\end{align*}

\begin{theorem}\label{th5.1}
Assume that $(\lambda,\mu,\theta)\in A_3\cup A_4$.
Then problem \eqref{P2} with $N=3,5$ possesses a positive mountain pass solution at level $\widetilde{c}_{M}>0$.
\end{theorem}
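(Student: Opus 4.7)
The plan is to follow the same blueprint as in the $N=4$ case, replacing every dimension-dependent ingredient by its analogue for $N=3$ and $N=5$. First I would invoke the existence results of Hajaiej et al.\ in the appropriate dimension to obtain a positive local minimum $u_0$ of $\widetilde{J}$ in a ball $B_\rho$ of the correct $N$-dependent radius, and then adapt Lemma \ref{gss} by decomposing $\widetilde{J}(u)-\frac{1}{2^*}\langle \widetilde{J}'(u),u\rangle$. This coincidence $\tfrac{1}{2}-\tfrac{1}{2^*}=\tfrac{1}{N}$ naturally produces the factor $e^{(N-2)/2}$ in the definitions of $A_3$ and $A_4$ (via the basic inequality $|t\log t|\leq 1/e$ applied to $e^{-(N-2)/2}(u^+)^2$), identifying $u_0$ with the positive least energy solution whose energy is $c_K<0$. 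The mountain pass geometry around $u_0$ is then obtained verbatim from Lemma \ref{MPG}.

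Next I would establish a $(PS)_c$ condition for $\widetilde{J}$ below the threshold
\[
c_K+\tfrac{1}{N}\mu^{-(N-2)/2}S^{N/2},
\]
adapting the proof of Lemma \ref{PS-condition} to the exponent $2^*=2N/(N-2)$. Boundedness of the $(PS)_c$ sequence follows from the same truncation with $\tfrac{1}{2^*}$ in place of $\tfrac{1}{4}$, Br\'ezis--Lieb splits off the critical term as $\|w_n^+\|_{2^*}^{2^*}+o_n(1)$, and the relation $\|w_n\|^2=\mu\|w_n^+\|_{2^*}^{2^*}+o_n(1)$ combined with the Sobolev embedding $\|w_n^+\|_{2^*}^2\leq S^{-1}\|w_n\|^2$ forces either $w_n\to 0$ or $\|w_n\|^2\geq \mu^{-(N-2)/2}S^{N/2}$, the latter being ruled out precisely by the above threshold. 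The convergence of the logarithmic pieces goes through as before since $2<2^*$ and the growth of $t\log t^2$ is subcritical.

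The main step, and the hard part, is to show
\[
\sup_{\beta\geq 0}\widetilde{J}(u_0+\beta\psi_\varepsilon)<c_K+\tfrac{1}{N}\mu^{-(N-2)/2}S^{N/2}
\]
for $\varepsilon>0$ small, where $\psi_\varepsilon=\varphi\, U_{\varepsilon,0}$ is the truncated Aubin--Talenti bubble for dimension $N$. The analogues of Lemmas \ref{estimate1} and \ref{estimate3} give the standard dimension-dependent bubble identities $\int|\nabla\psi_\varepsilon|^2=S^{N/2}+O(\varepsilon^{N-2})$, $\int|\psi_\varepsilon|^{2^*}=S^{N/2}+O(\varepsilon^N)$, and
\[
\int_\Omega |\psi_\varepsilon|^2\,dx =
\begin{cases} O(\varepsilon), & N=3,\\ O(\varepsilon^2), & N=5, \end{cases}
\qquad \int_\Omega u_0|\psi_\varepsilon|^{2^*-1}\,dx \geq c_N\, u_0(0)\,\varepsilon^{(N-2)/2}+O(\varepsilon^{(N+2)/2}),
\]
with $c_N>0$ explicit. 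A counterpart of Lemma \ref{estimate2} for the log term follows the same $\partial_\xi$-differentiation trick, using Lemma \ref{logarithmic inequality}(2) with $\delta$ chosen small enough so that the remainders $\int u_0^\delta \psi_\varepsilon^2$ and $\int \psi_\varepsilon^{2+\delta}$ are strictly higher order than $\varepsilon^{(N-2)/2}$. As in Lemma \ref{estimate4}, I would show the maximizer $\beta_\varepsilon$ is uniformly bounded and bounded away from $0$, so that the critical part $\tfrac{\beta_\varepsilon^2}{2}\int|\nabla\psi_\varepsilon|^2-\tfrac{\mu\beta_\varepsilon^{2^*}}{2^*}\int|\psi_\varepsilon|^{2^*}$ is bounded above by $\tfrac{1}{N}\mu^{-(N-2)/2}S^{N/2}$ plus negligible terms. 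The delicate verification is dimension by dimension: for $N=3$ one needs $\varepsilon^{1/2}$ to dominate $O(\varepsilon)$ and $O(\varepsilon^{3/2})$ type errors from the log, and for $N=5$ one needs $\varepsilon^{3/2}$ to dominate $O(\varepsilon^2)$ and $O(\varepsilon^{(N+2)/2})$ errors; in both cases the cross term $-\mu\beta_\varepsilon^{2^*-1}\int u_0|\psi_\varepsilon|^{2^*-1}$ wins. The proof of Theorem \ref{th5.1} is then completed exactly as in Section 4, using the mountain pass lemma, the compactness of Lemma \ref{PS-condition} below the threshold, and the Moser iteration plus H\"older/Vazquez regularity argument to conclude $u\in C^2(\Omega)$ with $u>0$ in $\Omega$.
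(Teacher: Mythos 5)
Your proposal follows essentially the same route as the paper's appendix: the paper likewise transplants the $N=4$ scheme (local minimum from \cite{Hajaiej}, identification with the least energy level, mountain pass geometry, $(PS)_c$ below $\widetilde{c}_K+\frac{1}{N}\mu^{-\frac{N-2}{2}}S^{\frac{N}{2}}$, truncated bubble and logarithmic estimates, boundedness of $\beta_\varepsilon$, and the cross term $\int_\Omega \widetilde{u}_0|\phi_\varepsilon|^{2^*-1}\mathrm{d}x\gtrsim\varepsilon^{\frac{N-2}{2}}$ dominating the error terms). The only minor imprecision is your phrase ``$\delta$ small enough'': the paper requires the two-sided constraint $\max\{0,\frac{4-N}{2(N-2)}\}<\delta<\frac{6-N}{2(N-2)}$, and for $N=3$ the lower bound $\delta>\frac{1}{2}$ is genuinely needed for $\int_\Omega|\phi_\varepsilon|^{2+2\delta}\mathrm{d}x$ to have the stated order, but this does not affect the correctness of your overall argument.
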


\begin{remark}\label{555}
In Theorems \ref{th1.1} and \ref{5.1}, we obtain the existence of a positive mountain pass solution to
problem \eqref{P2} with $3\leq N\leq 5$. However, the method can not be used to deal with the case $N\geq 6$,
since we can not control the mountain pass level from above by a proper constant to ensure the local compactness of the functional.
Maybe some new methods need to be applied to consider the case $N\geq 6$.
\end{remark}

The proof of Theorem \ref{th5.1} is completed by a series of lemmas.

\begin{lemma}\label{5.1}
Assume that $N=3,5$ and $(\lambda,\mu,\theta)\in A_3\cup A_4$.
Then problem \eqref{P2} possesses a positive local minimum solution 
$\widetilde{u}_0\in C^2(\Omega)\cap L^{\infty}(\Omega)$ such that $\|\widetilde{u}_0\|\leq\widetilde{\rho}$
and $\widetilde{J}(\widetilde{u}_0)=c_{\widetilde{\rho}}<0$ and a positive least energy solution 
$\widetilde{u}_k\in C^2(\Omega)\cap L^{\infty}(\Omega)$ such that
$\widetilde{J}(\widetilde{u}_k)=\widetilde{c}_K<0$,
where
\begin{align}\label{eq5-1}
\widetilde{\rho}=\left(\frac{\lambda_1(\Omega)-\lambda}{\lambda_1(\Omega)\mu}S^{\frac{2^*}{2}}\right)^{\frac{1}{2^*-2}} \
\text{if} \ (\lambda, \mu, \theta)\in A_3, \
\text{and} \ \widetilde{\rho}=\left(\mu^{-1}S^{\frac{2^*}{2}}\right)^{\frac{1}{2^*-2}} \ \text{if} \ (\lambda, \mu, \theta)\in A_4.
\end{align}
\end{lemma}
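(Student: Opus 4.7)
The plan is to mirror Section~2's argument for $N=4$ (following \cite{Hajaiej}), adjusting the exponents to the critical exponent $2^*=2N/(N-2)$. First, I would show $c_{\widetilde\rho}:=\inf_{B_{\widetilde\rho}}\widetilde J<0$ by evaluating $\widetilde J(t\phi_0)$ for a fixed positive $\phi_0\in C_c^\infty(\Omega)$ and $t>0$ small; the coefficient of $t^2$ is dominated by $|\theta|\log t^2\cdot\int\phi_0^2\to-\infty$, so $\widetilde J(t\phi_0)<0$ for small $t$. Next, I would verify $\widetilde J\geq 0$ on $\partial B_{\widetilde\rho}$. Using the elementary one-sided bound $s(\log s-\gamma)\geq -e^{\gamma-1}$ with $\gamma=1$ in the $A_3$ case and $\gamma=1-\lambda/\theta$ in the $A_4$ case, Poincar\'e (only for $A_3$), and $\|u\|_{2^*}^{2^*}\leq S^{-2^*/2}\|u\|^{2^*}$, a direct computation at $\|u\|=\widetilde\rho$ yields
\[
\widetilde J(u)\geq \frac{1}{N}\Bigl(\frac{\lambda_1(\Omega)-\lambda}{\lambda_1(\Omega)}\Bigr)^{\!N/2}\mu^{-(N-2)/2}S^{N/2}+\frac{\theta}{2}|\Omega|\quad\text{in }A_3,
\]
with the analogous $\tfrac{1}{N}\mu^{-(N-2)/2}S^{N/2}+\tfrac{\theta}{2}e^{-\lambda/\theta}|\Omega|$ in $A_4$. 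The numerical observation $e^{(N-2)/2}\geq N/2$ for $N\in\{3,4,5\}$ then shows that the hypotheses $A_3\cup A_4$ are strictly stronger than what is needed to make these right-hand sides nonnegative.

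The main obstacle in extracting a minimizer is that $-\int(u^+)^{2^*}$ is not weakly lower semicontinuous; I would handle this by a Br\'ezis-Lieb argument rather than by direct semicontinuity. A minimizing sequence $\{u_n\}\subset B_{\widetilde\rho}$ is bounded, so up to a subsequence $u_n\rightharpoonup\widetilde u_0\in B_{\widetilde\rho}$ weakly in $H_0^1(\Omega)$ and strongly in $L^p(\Omega)$ for $p<2^*$. The $L^2$ and logarithmic terms pass to the limit exactly as in the proofs of \eqref{log1}--\eqref{log2}, which are dimension-insensitive. Setting $w_n:=u_n-\widetilde u_0$, Br\'ezis-Lieb and Sobolev combine to give
\[
\widetilde J(u_n)-\widetilde J(\widetilde u_0)\geq g(\|w_n\|^2)+o_n(1),\qquad g(t):=\tfrac{1}{2}t-\tfrac{\mu}{2^*}S^{-2^*/2}t^{2^*/2}.
\]
A direct check verifies $\widetilde\rho^2\leq \mu^{-(N-2)/2}S^{N/2}$ in both $A_3$ and $A_4$, and the latter quantity is exactly where $g$ attains its maximum; since $g$ is nondecreasing and nonnegative on $[0,\mu^{-(N-2)/2}S^{N/2}]$ and $\|w_n\|^2\leq\widetilde\rho^2$, we deduce $\widetilde J(\widetilde u_0)\leq c_{\widetilde\rho}$ in the limit, hence equality; the same estimate forces $g(\|w_n\|^2)\to 0$ and thus $w_n\to 0$ strongly. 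Because $c_{\widetilde\rho}<0\leq\inf_{\partial B_{\widetilde\rho}}\widetilde J$, the minimizer is strictly interior and an unconstrained critical point; testing $\widetilde J'(\widetilde u_0)$ with $\widetilde u_0^-$ gives $\widetilde u_0\geq 0$, Moser iteration (using Lemma~\ref{logarithmic inequality}(2) to control the logarithm) followed by Schauder theory with $u\log u^2$ treated as in \cite{Dengyinbin,Vazquez1984} upgrades $\widetilde u_0$ to $C^2(\Omega)\cap L^\infty(\Omega)$, and the strong maximum principle yields $\widetilde u_0>0$.

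For the least energy solution, I would prove $\widetilde c_K:=\inf_{u\in K}\widetilde J(u)=c_{\widetilde\rho}$ by the same mechanism as in Lemma~\ref{gss}. On $K$, the Nehari-type identity $\widetilde J(u)=\widetilde J(u)-\tfrac{1}{2^*}\langle\widetilde J'(u),u\rangle$ expands to
\[
\widetilde J(u)=\frac{1}{N}\|u\|^2-\frac{\lambda}{N}\int(u^+)^2+\frac{\theta}{2}\int(u^+)^2-\frac{\theta}{N}\int(u^+)^2\log(u^+)^2,
\]
and applying $s(\log s-\gamma)\geq -e^{\gamma-1}$ with $\gamma=N/2$ in $A_3$ (using Poincar\'e for the $\lambda$-term) or $\gamma=N/2-\lambda/\theta$ in $A_4$ produces $\widetilde J(u)\geq \tfrac{\lambda_1-\lambda}{N\lambda_1}\|u\|^2+\tfrac{\theta}{N}e^{(N-2)/2}|\Omega|$ in $A_3$ (respectively $\tfrac{1}{N}\|u\|^2+\tfrac{\theta}{N}e^{(N-2)/2-\lambda/\theta}|\Omega|$ in $A_4$). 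For any critical point $u$ with $\widetilde J(u)<0$, this forces $\|u\|^2$ to be strictly less than a threshold which, by the very definition of $A_3$ (resp.\ $A_4$), equals $\widetilde\rho^2$; hence every negative-energy critical point lies in $B_{\widetilde\rho}$, so $\widetilde c_K\geq c_{\widetilde\rho}$. Combined with $\widetilde c_K\leq \widetilde J(\widetilde u_0)=c_{\widetilde\rho}$, equality holds; setting $\widetilde u_k:=\widetilde u_0$ furnishes the positive least energy solution with $\widetilde J(\widetilde u_k)=\widetilde c_K<0$, and the regularity/positivity statements are already established. The crux of the whole argument is that the specific constants $e^{(N-2)/2}$ and $e^{(N-2)/2-\lambda/\theta}$ appearing in $A_3$ and $A_4$ are precisely the outputs of the sharp log inequality with $\gamma=N/2$ and $\gamma=N/2-\lambda/\theta$, which is the only place where the dimension $N$ enters nontrivially.
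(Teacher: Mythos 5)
Your proposal is correct, and it supplies exactly the details the paper chooses to omit: the paper's ``proof'' of Lemma \ref{5.1} is a one-line citation of Theorems A.1 and A.2 of \cite{Hajaiej} (Ekeland's variational principle plus Br\'ezis--Lieb), and your direct minimization on $B_{\widetilde\rho}$ is the same circle of ideas made explicit --- the sphere estimate via $s(\log s-\gamma)\ge -e^{\gamma-1}$, the Br\'ezis--Lieb/Sobolev compensation for the failure of weak lower semicontinuity of the critical term (your function $g$ with maximizer exactly at $\mu^{-(N-2)/2}S^{N/2}\ge\widetilde\rho^2$), the interior-minimizer-implies-critical-point step, and the usual regularity/strong maximum principle chain. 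Where you genuinely depart from the paper's organization is the least energy solution: the paper gets $\widetilde u_k$ from Hajaiej's Theorem A.2 (valid under the weaker $\Sigma$-type conditions) and only in Lemma \ref{5.2} identifies $\widetilde c_K$ with $c_{\widetilde\rho}$, whereas you prove $\widetilde c_K=c_{\widetilde\rho}$ directly with the $\gamma=N/2$ (resp.\ $\gamma=N/2-\lambda/\theta$) log bound --- precisely the $N$-dimensional analogue of Lemma \ref{gss} --- and set $\widetilde u_k:=\widetilde u_0$; this is legitimate, self-contained, and in effect proves Lemma \ref{5.2} simultaneously, at the (harmless) cost of using the full strength of $A_3\cup A_4$ rather than the weaker conditions under which Hajaiej's existence results hold. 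Two small touch-ups: the norm threshold you extract for negative-energy critical points is \emph{at most} $\widetilde\rho^2$ by the defining inequality of $A_3$ (resp.\ $A_4$), not equal to it (the conclusion is unaffected); and when applying Br\'ezis--Lieb to positive parts you should note $|u_n^+-\widetilde u_0^+|\le|w_n|$ pointwise so that the remainder is still controlled by $S^{-2^*/2}\|w_n\|^{2^*}$. Your observation that $e^{(N-2)/2}\ge N/2$ for $N\in\{3,4,5\}$ is indeed the only place the dimension restriction enters this lemma.
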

\begin{proof}
The proof follows directly from the strategies in [\cite{Hajaiej}, Theorems A.1 and A.2], so we omit the details here.
\end{proof}

Similar to Lemmas \ref{gss}, \ref{MPG}, \ref{PS-condition} we have the following lemmas.

\begin{lemma}\label{5.2}
Assume that $N=3,5$ and $(\lambda,\mu,\theta)\in A_3\cup A_4$. Then $\widetilde{u}_0$ is also a positive least energy solution with $\widetilde{J}(\widetilde{u}_0)=\widetilde{c}_{K}$.
\end{lemma}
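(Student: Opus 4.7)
The plan is to follow the blueprint of Lemma \ref{gss} verbatim, with only the dimensional exponents adjusted from $4$ to the general Sobolev critical exponent $2^{*}=\frac{2N}{N-2}$. Since the local minimum $\widetilde{u}_0$ furnished by Lemma \ref{5.1} is itself a critical point of $\widetilde{J}$, the inequality $\widetilde{c}_K\leq\widetilde{J}(\widetilde{u}_0)=c_{\widetilde{\rho}}$ is automatic; the real content is the reverse inequality, which I would obtain by proving that the positive least energy solution $\widetilde{u}_k$ satisfies $\|\widetilde{u}_k\|\leq\widetilde{\rho}$.

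First I would evaluate the Pohozaev-type combination chosen to kill the critical term,
\begin{align*}
0>\widetilde{c}_K=\widetilde{J}(\widetilde{u}_k)-\tfrac{1}{2^{*}}\langle\widetilde{J}'(\widetilde{u}_k),\widetilde{u}_k\rangle
=\tfrac{1}{N}\|\widetilde{u}_k\|^{2}-\tfrac{\lambda}{N}\int_{\Omega}|\widetilde{u}_k^{+}|^{2}\mathrm{d}x+\tfrac{\theta}{2}\int_{\Omega}|\widetilde{u}_k^{+}|^{2}\mathrm{d}x-\tfrac{\theta}{N}\int_{\Omega}(\widetilde{u}_k^{+})^{2}\log(\widetilde{u}_k^{+})^{2}\mathrm{d}x,
\end{align*}
which replaces the weight $\tfrac{1}{4}$ used when $N=4$. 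Next I would bound the logarithmic remainder from below by rewriting the last two terms as $-\tfrac{\theta}{N}e^{N/2}\int e^{-N/2}(\widetilde{u}_k^{+})^{2}\log\!\bigl(e^{-N/2}(\widetilde{u}_k^{+})^{2}\bigr)\mathrm{d}x$ and splitting the domain: on $\{e^{-N/2}(\widetilde{u}_k^{+})^{2}>1\}$ the integrand is nonnegative, and on the complementary set \eqref{log-1} gives the bound $-\tfrac{1}{e}$. The resulting lower bound $\tfrac{\theta}{N}e^{(N-2)/2}|\Omega|$ is the natural $N$-dimensional analogue of the $\tfrac{\theta}{4}e|\Omega|$ used in Case~1 of Lemma~\ref{gss}.

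I would then split into the two cases exactly as before. For $(\lambda,\mu,\theta)\in A_3$, combining $\lambda\in[0,\lambda_1(\Omega))$ with Poincar\'e's inequality and $\widetilde{c}_K<0$ yields $\|\widetilde{u}_k\|^{2}<\tfrac{\lambda_1(\Omega)}{\lambda_1(\Omega)-\lambda}|\theta|e^{(N-2)/2}|\Omega|$. Rewriting \eqref{eq5-1} as $\widetilde{\rho}^{2}=\bigl(\tfrac{\lambda_1(\Omega)-\lambda}{\lambda_1(\Omega)\mu}\bigr)^{(N-2)/2}S^{N/2}$ (using $2^{*}-2=\tfrac{4}{N-2}$), the defining inequality of $A_3$ reads exactly as $\tfrac{\lambda_1(\Omega)}{\lambda_1(\Omega)-\lambda}|\theta|e^{(N-2)/2}|\Omega|\leq\widetilde{\rho}^{2}$, so $\|\widetilde{u}_k\|\leq\widetilde{\rho}$. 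For $(\lambda,\mu,\theta)\in A_4$, I would absorb the $\lambda$-term by using the shifted rescaling $-\tfrac{\theta}{N}e^{N/2-\lambda/\theta}\int e^{-N/2+\lambda/\theta}(\widetilde{u}_k^{+})^{2}\log\!\bigl(e^{-N/2+\lambda/\theta}(\widetilde{u}_k^{+})^{2}\bigr)\mathrm{d}x$ and the same $t\log t\geq-\tfrac{1}{e}$ argument, obtaining $\|\widetilde{u}_k\|^{2}<|\theta|e^{(N-2)/2-\lambda/\theta}|\Omega|$, which together with $\widetilde{\rho}^{2}=\mu^{-(N-2)/2}S^{N/2}$ and the $A_4$ inequality again gives $\|\widetilde{u}_k\|\leq\widetilde{\rho}$.

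Combining the two cases yields $\widetilde{u}_k\in B_{\widetilde{\rho}}$, hence $c_{\widetilde{\rho}}\leq\widetilde{J}(\widetilde{u}_k)=\widetilde{c}_K$, so $\widetilde{J}(\widetilde{u}_0)=c_{\widetilde{\rho}}=\widetilde{c}_K$. The main obstacle is not analytic but dimensional bookkeeping: one must verify that the correct Pohozaev weight is $\tfrac{1}{2^{*}}$ (producing the prefactor $\tfrac{1}{N}$ on $\|\widetilde{u}_k\|^{2}$) and that the exponent $e^{(N-2)/2}$ emerging from the logarithmic inequality matches exactly the exponent appearing in the definitions of $A_3$ and $A_4$. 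Once this matching is confirmed, no new analytic ingredient beyond Lemma \ref{logarithmic inequality} is required.
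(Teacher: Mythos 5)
Your proposal is correct and is exactly the adaptation the paper intends: the paper omits the proof, stating it is "similar to Lemma \ref{gss}", and your argument reproduces that proof with the Pohozaev weight $\tfrac{1}{2^*}$ in place of $\tfrac14$, the lower bound $\tfrac{\theta}{N}e^{(N-2)/2}|\Omega|$ (resp.\ $\tfrac{\theta}{N}e^{(N-2)/2-\lambda/\theta}|\Omega|$) in place of $\tfrac{\theta}{4}e|\Omega|$ (resp.\ $\tfrac{\theta}{4}e^{1-\lambda/\theta}|\Omega|$), and the correct identification $\widetilde{\rho}^2=\bigl(\tfrac{\lambda_1(\Omega)-\lambda}{\lambda_1(\Omega)\mu}\bigr)^{(N-2)/2}S^{N/2}$ or $\mu^{-(N-2)/2}S^{N/2}$ so that the defining inequalities of $A_3$, $A_4$ give $\|\widetilde{u}_k\|\leq\widetilde{\rho}$. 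The dimensional bookkeeping you flag as the only risk checks out, so no gap remains.
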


\begin{lemma}\label{5.3}
Assume that $N=3,5$ and $(\lambda,\mu,\theta)\in A_3\cup A_4$. Then we have

$\mathrm{(i)}$ there exists an $\widetilde{r}>\|\widetilde{u}_0\|$ such that $\widetilde{J}(w)>\widetilde{J}(\widetilde{u}_0)$ for all $w\in H_0^1(\Omega)$ with $\|w\|=\widetilde{r}$;

$\mathrm{(ii)}$ For any $v\in H_0^1(\Omega)\backslash\{0\}$, there exists $\beta^{*}>0$ such that $\|\widetilde{u}_0+\beta^{*}v\|>\widetilde{r}$ and
$\widetilde{J}(\widetilde{u}_0+\beta^{*}v)\leq\widetilde{J}(\widetilde{u}_0)$.
\end{lemma}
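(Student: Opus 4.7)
The plan is to carry over the proof of Lemma~\ref{MPG} to the three- and five-dimensional setting essentially unchanged, since the only ingredients used in that proof are (a) an energy threshold on the sphere $\|u\|=\widetilde{\rho}$ that dominates $\widetilde{J}(\widetilde{u}_0)=c_{\widetilde{\rho}}<0$, and (b) the fact that $\widetilde{J}(\widetilde{u}_0+\beta v)\to-\infty$ as $\beta\to+\infty$ along any nonzero direction $v$. Both survive the transition from $2^{*}=4$ to $2^{*}=6$ or $2^{*}=10/3$.

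For part (i), I would take $\widetilde{r}=\widetilde{\rho}$ with $\widetilde{\rho}$ given by \eqref{eq5-1}. On the sphere $\|u\|=\widetilde{\rho}$, I will invoke the analogue of \eqref{ineqt1} coming from Lemma~A.1 in \cite{Hajaiej}: using the Sobolev inequality \eqref{S}, Poincar\'e's inequality (for $(\lambda,\mu,\theta)\in A_3$) or the bound \eqref{log-1} (for $(\lambda,\mu,\theta)\in A_4$) to handle the logarithmic term exactly as in \eqref{ineqt2}--\eqref{ineqt3}, one finds a constant $\widetilde{\alpha}>0$ such that $\widetilde{J}(w)\geq \widetilde{\alpha}$ whenever $\|w\|=\widetilde{\rho}$. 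The precise inequality defining $A_3$ (respectively $A_4$) is exactly what ensures that $\widetilde{\alpha}$ remains strictly positive after absorbing the negative contribution of the logarithmic term. Since $\widetilde{J}(\widetilde{u}_0)=c_{\widetilde{\rho}}<0<\widetilde{\alpha}$ by Lemma~\ref{5.1}, this gives $\widetilde{J}(w)>\widetilde{J}(\widetilde{u}_0)$ on $\|w\|=\widetilde{r}$, while the bound $\|\widetilde{u}_0\|\leq\widetilde{\rho}=\widetilde{r}$ is strict because $\widetilde{u}_0$ lies in the interior of the ball (otherwise $\widetilde{\alpha}\leq c_{\widetilde{\rho}}<0$, a contradiction).

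For part (ii), fix $v\in H_0^1(\Omega)\setminus\{0\}$ and consider the map $\beta\mapsto \widetilde{J}(\widetilde{u}_0+\beta v)$. Expanding the functional, the leading-order term as $\beta\to+\infty$ is $-\frac{\mu}{2^{*}}\beta^{2^{*}}\int_\Omega|v^{+}|^{2^{*}}\mathrm{d}x$ (provided $v^{+}\not\equiv 0$; if $v^{+}\equiv 0$, replace $v$ by $-v$, or argue directly since $(\widetilde{u}_0+\beta v)^{+}$ is eventually nontrivial on any set where $v<0$ is bounded below in magnitude--actually the cleanest fix is to note that $(\widetilde{u}_0+\beta v)^{+}\to+\infty$ pointwise a.e.\ on the set $\{v>0\}$ or $\{\widetilde{u}_0+\beta v>0\}$, whose measure stays positive for large $\beta$). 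Since $2^{*}>2$ for $N=3,5$, this term dominates both the quadratic gradient and $L^2$ terms of order $\beta^2$ and the logarithmic term, which grows only like $\beta^2\log\beta$ (using \eqref{log-3} with a small $\delta$ to crudely bound it). Consequently $\widetilde{J}(\widetilde{u}_0+\beta v)\to-\infty$, and one can choose $\beta^{*}>0$ large enough that both $\|\widetilde{u}_0+\beta^{*}v\|>\widetilde{r}$ and $\widetilde{J}(\widetilde{u}_0+\beta^{*}v)\leq \widetilde{J}(\widetilde{u}_0)$.

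The only mildly delicate point is handling the logarithmic term in the growth estimate of part (ii): it is superquadratic but not polynomial, and it has the ``wrong'' sign (since $\theta<0$, it contributes $+\tfrac{|\theta|}{2}\int(u^{+})^2\log(u^{+})^2$ which is positive for large $u$). Fortunately, \eqref{log-3} yields $|t\log t^2|\leq C_\delta t^{2+\delta}$ for large $t$ and any small $\delta>0$, so choosing $\delta$ with $2+\delta<2^{*}$ (possible for $N=3,5$) ensures the $L^{2^{*}}$ term still dominates. The rest is routine and entirely parallel to Lemma~\ref{MPG}.
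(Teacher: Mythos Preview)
Your approach is exactly what the paper intends: it gives no separate proof of Lemma~\ref{5.3}, simply declaring it the $N=3,5$ analogue of Lemma~\ref{MPG}, and your argument for (i) via $\widetilde{r}=\widetilde{\rho}$ together with the sphere bound $\widetilde{J}\geq\widetilde{\alpha}>0>c_{\widetilde{\rho}}$ is correct and matches Lemma~\ref{MPG}(i) verbatim.

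For (ii), however, you correctly sense trouble when $v^{+}\equiv 0$, but none of your proposed fixes works, and in fact the assertion is false for such $v$. Take $v=-\widetilde{u}_0$: then $(\widetilde{u}_0+\beta v)^{+}=(1-\beta)^{+}\widetilde{u}_0$, so for $\beta\geq 1$ all nonlinear terms vanish and $\widetilde{J}(\widetilde{u}_0+\beta v)=\tfrac{1}{2}(1-\beta)^{2}\|\widetilde{u}_0\|^{2}\to+\infty$, while for $\beta\in(0,1)$ one has $\|\widetilde{u}_0+\beta v\|<\|\widetilde{u}_0\|<\widetilde{r}$. Hence no $\beta^{*}$ satisfies both conditions. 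Replacing $v$ by $-v$ does not prove the claim for the given $v$, and your other suggestions misread the behaviour of $(\widetilde{u}_0+\beta v)^{+}$ on $\{v<0\}$ (it tends to $0$ there, not to $+\infty$). The paper's own proof of Lemma~\ref{MPG}(ii) contains the same overstatement (``for any $v$\dots $J(u_0+\beta v)\to-\infty$''), but there the conclusion only asks for \emph{some} $v^{*}$, so no harm is done; in Lemma~\ref{5.3}(ii) the quantifier has been inadvertently strengthened. Since the lemma is applied only with $v=\phi_{\varepsilon}\geq 0$, for which $v^{+}\not\equiv 0$ and your leading-term comparison ($2^{*}>2+\delta>2$) goes through cleanly, the right remedy is to add the hypothesis $v^{+}\not\equiv 0$ (or simply $v\geq 0$) to part~(ii) rather than to attempt the unrestricted claim.
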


\begin{lemma}\label{5.4}
Assume that $N=3,5$, $(\lambda,\mu,\theta)\in A_3\cup A_4$ and
\begin{align*}
c<\widetilde{c}_{K}+\frac{1}{N}\mu^{-\frac{N-2}{2}}S^{\frac{N}{2}},
\end{align*}
then $\widetilde{J}(u)$ satisfies the $(PS)_c$ condition.
\end{lemma}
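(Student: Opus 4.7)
\noindent\textbf{Proof proposal for Lemma \ref{5.4}.}
The plan is to mirror the proof of Lemma \ref{PS-condition}, with every power and constant updated from the $N=4$ setting to a general exponent $2^{*}=\frac{2N}{N-2}$. First I would take a $(PS)_c$ sequence $\{u_n\}\subset H_0^1(\Omega)$ and establish boundedness by computing $\widetilde{J}(u_n)-\frac{1}{2^{*}}\langle \widetilde{J}'(u_n),u_n\rangle$. This kills the critical term and leaves $\frac{1}{N}\|u_n\|^2$ plus the quadratic $\lambda$-term and the logarithmic terms. Exactly as in \eqref{ineqt2}--\eqref{ineqt3}, the identity
\begin{align*}
\frac{\theta}{2}\int_{\Omega}(u_n^+)^2\mathrm{d}x-\frac{\theta}{N}\int_{\Omega}(u_n^+)^2\log(u_n^+)^2\mathrm{d}x-\frac{\lambda}{N}\int_{\Omega}(u_n^+)^2\mathrm{d}x=-\frac{\theta}{N}\int_{\Omega}(u_n^+)^2\bigl(\log(u_n^+)^2-\tfrac{N}{2}+\tfrac{\lambda}{\theta}\bigr)\mathrm{d}x
\end{align*}
can be handled via the substitution $s=e^{-N/2+\lambda/\theta}(u_n^+)^2$ and the bound $|s\log s|\le 1/e$ on $\{s\le 1\}$, yielding a lower bound $\frac{\theta}{N}e^{(N-2)/2-\lambda/\theta}|\Omega|$ in the $A_4$ case and, using instead $\|u^+\|_2^2\le \lambda_1(\Omega)^{-1}\|u\|^2$ to absorb $\lambda$, a bound involving $\frac{\theta}{N}e^{(N-2)/2}|\Omega|$ in the $A_3$ case. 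Either way, $\{u_n\}$ is bounded in $H_0^1(\Omega)$.

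Then I would pass to a subsequence $u_n\rightharpoonup u$ in $H_0^1(\Omega)$, strongly in $L^p(\Omega)$ for every $1\le p<2^{*}$, and a.e. in $\Omega$. Apply Br\'{e}zis--Lieb's lemma to $\|u_n^+\|_{2^{*}}^{2^{*}}$ and the weak convergence to $\|u_n\|^2$ to decompose both norms as $\|w_n\|^2+\|u\|^2+o_n(1)$ and $\|w_n^+\|_{2^{*}}^{2^{*}}+\|u^+\|_{2^{*}}^{2^{*}}+o_n(1)$ with $w_n:=u_n-u$. The convergence of the logarithmic nonlinearity (the analogues of \eqref{log1}--\eqref{log2}) goes through verbatim since it is driven by compact sub-critical embeddings which hold in every dimension. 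From $\widetilde{J}'(u_n)\to 0$ one then obtains $\widetilde{J}'(u)=0$, so testing against $u^{-}$ gives $u\ge 0$ and hence $\widetilde{J}(u)\ge \widetilde{c}_K$.

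Next I would set $k:=\lim_{n\to\infty}\|w_n\|^2\ge 0$. Subtracting $\langle \widetilde{J}'(u),u\rangle=0$ from $\langle \widetilde{J}'(u_n),u_n\rangle=o_n(1)$ and using the decompositions gives $\|w_n\|^2=\mu\|w_n^+\|_{2^{*}}^{2^{*}}+o_n(1)$, so $\mu\|w_n^+\|_{2^{*}}^{2^{*}}\to k$ as well. The Sobolev inequality $S\|w_n^+\|_{2^{*}}^{2}\le \|w_n\|^2$ then yields
\begin{align*}
S\bigl(\mu^{-1}k\bigr)^{2/2^{*}}\le k,
\end{align*}
so if $k>0$ then $k\ge \mu^{-(N-2)/2}S^{N/2}$ (using $1-\tfrac{2}{2^{*}}=\tfrac{2}{N}$). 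Finally, expanding $\widetilde{J}(u_n)=c+o_n(1)$ via the decompositions shows $c=\widetilde{J}(u)+\bigl(\tfrac{1}{2}-\tfrac{1}{2^{*}}\bigr)k+o_n(1)=\widetilde{J}(u)+\tfrac{k}{N}$, so
\begin{align*}
\widetilde{J}(u)=c-\frac{k}{N}\le c-\frac{1}{N}\mu^{-(N-2)/2}S^{N/2}<\widetilde{c}_K,
\end{align*}
contradicting $\widetilde{J}(u)\ge \widetilde{c}_K$. Hence $k=0$ and $u_n\to u$ strongly in $H_0^1(\Omega)$.

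The only conceptual obstacle compared to Lemma \ref{PS-condition} is recalibrating the logarithmic lower bound so that the resulting constants ($e^{(N-2)/2}$, $e^{(N-2)/2-\lambda/\theta}$) match the definitions of $A_3$ and $A_4$; all remaining steps are purely algebraic adjustments of exponents. Once this is done, the Sobolev scaling argument directly produces the sharp compactness threshold $\frac{1}{N}\mu^{-(N-2)/2}S^{N/2}$ appearing in the hypothesis.
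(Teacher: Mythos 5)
Your proposal is correct and is essentially the argument the paper intends: Lemma \ref{5.4} is stated there without proof as the direct analogue of Lemma \ref{PS-condition}, and your adaptation (using $\widetilde{J}(u_n)-\frac{1}{2^{*}}\langle\widetilde{J}'(u_n),u_n\rangle$ with $\frac12-\frac1{2^*}=\frac1N$, the recalibrated logarithmic lower bounds $\frac{\theta}{N}e^{\frac{N-2}{2}}|\Omega|$ and $\frac{\theta}{N}e^{\frac{N-2}{2}-\frac{\lambda}{\theta}}|\Omega|$, Br\'ezis--Lieb, and the Sobolev scaling giving $k\ge\mu^{-\frac{N-2}{2}}S^{\frac N2}$) reproduces it faithfully. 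All exponent computations check out, so no gaps.
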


Without loss of generality, assume that there exists $\varrho >0$ such that $B_{2\varrho}(0)\subset\Omega$ and
\begin{align}\label{5-eq1}
\widetilde{m}_{\varrho}:=\inf_{x\in B_{\varrho}(0)}\widetilde{u}_0(x)\geq\frac{1}{2}\widetilde{u}_0(0).
\end{align}
Let $\varphi(x)\in C_0^{\infty}(\Omega)$ be a radial function such that $\varphi(x)\equiv1$ for $0\leq|x|\leq\varrho$, $0\leq\varphi(x)\leq1$ for $\varrho\leq|x|\leq2\varrho$ and $\varphi(x)\equiv0$ for $|x|\geq2\varrho$.
Define
$$U_{\varepsilon}(x)=C_N\left(\frac{\varepsilon}{\varepsilon^2+|x|^2}\right)^{\frac{N-2}{2}}, \ \ x\in \mathbb{R}^N,$$
and
$$\phi_{\varepsilon}=\varphi(x)U_{\varepsilon}(x),$$
where $C_N=[N(N-2)]^{\frac{N-2}{4}}$.
Then we have the following results.

\begin{lemma}\label{5.5}
Assume that $N=3,5$. Then as $\varepsilon\rightarrow 0^+$,
\begin{eqnarray*}
\int_{\Omega}|\nabla \phi_{\varepsilon}|^2\mathrm{d}x=S^{\frac{N}{2}}+O(\varepsilon^{N-2}),
\end{eqnarray*}
\begin{eqnarray*}
\int_{\Omega}|\phi_{\varepsilon}|^{2^*}\mathrm{d}x=S^{\frac{N}{2}}+O(\varepsilon^N),
\end{eqnarray*}
\begin{eqnarray*}
\int_{\Omega}|\phi_{\varepsilon}|^{2^*-1}\mathrm{d}x=O(\varepsilon^{\frac{N-2}{2}}),
\end{eqnarray*}
\begin{eqnarray*}
\int_{\Omega}|\phi_{\varepsilon}|\mathrm{d}x=O(\varepsilon^{\frac{N-2}{2}}),
\end{eqnarray*}
and
\begin{eqnarray*}
\int_{\Omega}|\phi_{\varepsilon}|^2\mathrm{d}x=
\begin{cases}
O(\varepsilon), \ &if\ N=3,\\
d\varepsilon^2+O(\varepsilon^3), \ &if\ N=5,
\end{cases}
\end{eqnarray*}
where $d$ is a positive constant.
\end{lemma}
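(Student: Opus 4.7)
\medskip
\noindent\textbf{Proposal for the proof of Lemma \ref{5.5}.}
The plan is to handle all five estimates by the same two-step recipe, which is the classical cutoff--plus--rescaling scheme for Aubin--Talenti bubbles. For each integrand $F(\phi_\varepsilon)$, I first split
\begin{equation*}
\int_{\Omega} F(\phi_\varepsilon)\,\mathrm{d}x
=\int_{B_{\varrho}(0)} F(U_\varepsilon)\,\mathrm{d}x
+\int_{B_{2\varrho}(0)\setminus B_{\varrho}(0)} F(\varphi U_\varepsilon)\,\mathrm{d}x,
\end{equation*}
using that $\varphi\equiv 1$ on $B_\varrho(0)$ and $\varphi\equiv 0$ outside $B_{2\varrho}(0)$. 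On the outer annulus one has the pointwise bound $|U_\varepsilon(x)|\le C\varepsilon^{(N-2)/2}/|x|^{N-2}\le C\varepsilon^{(N-2)/2}/\varrho^{N-2}$, so the annular contribution to $\int|\phi_\varepsilon|^p$ is controlled by a constant times $\varepsilon^{p(N-2)/2}$; this will always be a higher-order remainder compared to the main term coming from $B_\varrho(0)$. On the inner ball I apply the change of variables $y=x/\varepsilon$, which reduces each bulk integral to
\begin{equation*}
\varepsilon^{\,N-p(N-2)/2}\int_{B_{\varrho/\varepsilon}(0)}\frac{\mathrm{d}y}{(1+|y|^2)^{p(N-2)/2}},
\end{equation*}
up to the constant $C_N^{\,p}$.

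For the $L^{2^*}$ estimate I note $p(N-2)/2=N$, so the $\mathbb{R}^N$-integral $\int_{\mathbb{R}^N}(1+|y|^2)^{-N}\,\mathrm{d}y$ converges, yields exactly $S^{N/2}/C_N^{2^*}$ by the Aubin--Talenti identity, and the tail $\int_{B_{\varrho/\varepsilon}^c}(1+|y|^2)^{-N}\,\mathrm{d}y=O((\varepsilon/\varrho)^N)$; combined with the $\varepsilon^0$ prefactor this gives the claimed $S^{N/2}+O(\varepsilon^N)$. For the gradient, I expand $|\nabla\phi_\varepsilon|^2=\varphi^2|\nabla U_\varepsilon|^2+2\varphi U_\varepsilon\nabla\varphi\cdot\nabla U_\varepsilon+|\nabla\varphi|^2 U_\varepsilon^2$; the first term on $B_\varrho(0)$ gives $S^{N/2}$ after the same rescaling, and all other pieces are supported on the annulus where $U_\varepsilon$ and $|\nabla U_\varepsilon|$ are bounded by $C\varepsilon^{(N-2)/2}$ in absolute value, producing an $O(\varepsilon^{N-2})$ remainder. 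The $L^{2^*-1}$ and $L^1$ estimates follow similarly: the exponents $p(N-2)/2$ are below $N/2$ so the rescaled integral grows polynomially in $\varrho/\varepsilon$, but the prefactor $\varepsilon^{N-p(N-2)/2}$ wins, giving in both cases the order $\varepsilon^{(N-2)/2}$ (using $N-2^*+1$ and $N-(N-2)/2=(N+2)/2$ combined with the appropriate power of $\varrho/\varepsilon$).

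The only place where the $N=3$ and $N=5$ behaviors diverge is the $L^2$ estimate, where $p(N-2)/2=(N-2)/2$, so the prefactor is $\varepsilon^{(N+2)/2-\ldots}$; more precisely, after rescaling one has $\varepsilon^{2}\int_{B_{\varrho/\varepsilon}(0)}(1+|y|^2)^{-(N-2)}\,\mathrm{d}y$. For $N=5$ the exponent $N-2=3$ is strictly larger than $N/2=5/2$, so the $\mathbb{R}^5$-integral converges to a positive constant $d/C_5^2$, and the tail, handled as above by the explicit antiderivative of $r^{N-1}/(1+r^2)^{N-2}$, is $O(\varepsilon^3)$, yielding $d\varepsilon^2+O(\varepsilon^3)$. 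For $N=3$ the integrand is $(1+|y|^2)^{-1}$ in $\mathbb{R}^3$ and the integral over $B_{\varrho/\varepsilon}(0)$ grows like $\varrho/\varepsilon$ (computable again by passing to polar coordinates), producing $\varepsilon^{2}\cdot O(\varepsilon^{-1})=O(\varepsilon)$; combined with the annular remainder, which is $O(\varepsilon)$ as well, this gives the desired $O(\varepsilon)$.

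The calculations are entirely routine: no single step is hard, and the only genuine bookkeeping issue is keeping the powers of $\varepsilon$ consistent in both dimensions and verifying that the tail $\int_{B_{\varrho/\varepsilon}^{c}(0)}(1+|y|^2)^{-\alpha}\,\mathrm{d}y$ (handled in the $N=4$ proof of Lemma \ref{estimate3} verbatim) dominates the annular contribution in each case; I expect no real obstacle beyond adapting the formulas of Lemma \ref{estimate1} and Lemma \ref{estimate3} to the exponent $(N-2)/2$.
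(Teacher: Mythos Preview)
Your proposal is correct and follows exactly the standard cutoff--plus--rescaling computation to which the paper simply defers by citing \cite{Brezis1983}, \cite{Capozzi1985}, and \cite{Dengyinbin}; the paper itself gives no independent argument. One small slip: for $p=2^*-1$ the exponent $p(N-2)/2=(N+2)/2$ is \emph{above} $N/2$, so the rescaled integral actually converges rather than growing polynomially---but this only simplifies that case and leaves the stated order $O(\varepsilon^{(N-2)/2})$ unchanged.
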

\begin{proof}
The proof can be found in \cite{Brezis1983}, [\cite{Capozzi1985}, Lemma 2.1] and \cite{Dengyinbin}.
\end{proof}

\begin{lemma}\label{5.6}
Assume that $N=3,5$. Then as $\varepsilon\rightarrow 0^+$,
\begin{align*}
\int_{\Omega}|\phi_{\varepsilon}|^{2+2\delta}\mathrm{d}x=O(\varepsilon^{2-(N-2)\delta}),
\end{align*}
and
\begin{align*}
\int_{\Omega}\widetilde{u}_0|\phi_{\varepsilon}|^{2^{*}-1}\mathrm{d}x\geq \frac{1}{2}C_N^{2^*-1}\widetilde{u}_0(0)\varepsilon^{\frac{N-2}{2}}\int_{\mathbb{R}^N}\frac{1}{(1+|y|^2)^{\frac{N+2}{2}}}\mathrm{d}y
+ O(\varepsilon^{\frac{N+2}{2}}),
\end{align*}
where $\max\{0,\frac{4-N}{2(N-2)}\}<\delta<\frac{6-N}{2(N-2)}$.
\end{lemma}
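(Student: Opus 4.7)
My plan is to treat the two estimates separately, both by exploiting the radial structure of $U_\varepsilon$ and the splitting $\Omega\supset B_{2\varrho}(0)=B_\varrho(0)\cup\bigl(B_{2\varrho}(0)\setminus B_\varrho(0)\bigr)$, on which the cut-off $\varphi$ behaves very differently.

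For the first estimate, I would start by writing
$|\phi_\varepsilon|^{2+2\delta}=\varphi(x)^{2+2\delta}C_N^{2+2\delta}\Bigl(\dfrac{\varepsilon}{\varepsilon^2+|x|^2}\Bigr)^{(N-2)(1+\delta)}$
and decompose the integral accordingly. On $B_\varrho(0)$ we have $\varphi\equiv 1$, so after the rescaling $x=\varepsilon y$ the integral becomes
\[
\varepsilon^{\,N-(N-2)(1+\delta)}\int_{B_{\varrho/\varepsilon}(0)}\dfrac{C_N^{2+2\delta}}{(1+|y|^2)^{(N-2)(1+\delta)}}\,\mathrm{d}y,
\]
and the exponent on $\varepsilon$ equals $2-(N-2)\delta$. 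The remaining integral converges to a finite constant as $\varepsilon\to 0^+$ precisely when $2(N-2)(1+\delta)>N$, i.e.\ $\delta>\frac{4-N}{2(N-2)}$, which is exactly the lower constraint in the statement. On the annulus $B_{2\varrho}(0)\setminus B_\varrho(0)$, the bound $|x|\ge\varrho$ and $0\le\varphi\le 1$ give
\[
\int_{B_{2\varrho}(0)\setminus B_\varrho(0)}\varphi^{2+2\delta}\Bigl(\tfrac{\varepsilon}{\varepsilon^2+|x|^2}\Bigr)^{(N-2)(1+\delta)}\mathrm{d}x\le C\varepsilon^{(N-2)(1+\delta)}\int_{\varrho}^{2\varrho}r^{N-1-2(N-2)(1+\delta)}\mathrm{d}r=O(\varepsilon^{(N-2)(1+\delta)}),
\]
which is of higher order than $\varepsilon^{2-(N-2)\delta}$ once $\delta\ge\frac{4-N}{2(N-2)}$, as a direct comparison of exponents shows (for $N=3$ this requires $\delta>\tfrac12$, for $N=5$ this is automatic for $\delta>0$). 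Combining the two regions yields the claimed $O(\varepsilon^{2-(N-2)\delta})$.

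For the second estimate, I would discard the annulus contribution (which is nonnegative since $\widetilde u_0>0$) and use \eqref{5-eq1} to bound $\widetilde u_0(x)\ge\frac12\widetilde u_0(0)$ on $B_\varrho(0)$, where $\varphi\equiv 1$. This gives
\[
\int_\Omega \widetilde u_0|\phi_\varepsilon|^{2^*-1}\mathrm{d}x\ge \tfrac12\widetilde u_0(0)C_N^{2^*-1}\int_{B_\varrho(0)}\Bigl(\dfrac{\varepsilon}{\varepsilon^2+|x|^2}\Bigr)^{\frac{N+2}{2}}\mathrm{d}x.
\]
The rescaling $x=\varepsilon y$ extracts the factor $\varepsilon^{\frac{N-2}{2}}$ and leaves
$\int_{B_{\varrho/\varepsilon}(0)}(1+|y|^2)^{-\frac{N+2}{2}}\mathrm{d}y=\int_{\mathbb{R}^N}(1+|y|^2)^{-\frac{N+2}{2}}\mathrm{d}y - \int_{B^c_{\varrho/\varepsilon}(0)}(1+|y|^2)^{-\frac{N+2}{2}}\mathrm{d}y$. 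The last (tail) integral is estimated in polar coordinates: the integrand behaves like $r^{-3}$ at infinity, so it is $O((\varrho/\varepsilon)^{-2})=O(\varepsilon^2)$, contributing $O(\varepsilon^{\frac{N+2}{2}})$ after multiplication by $\varepsilon^{\frac{N-2}{2}}$. This is exactly the stated lower bound.

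No step looks genuinely hard: the computations are purely elementary rescalings and polar-coordinate tail estimates, completely analogous to those carried out for Lemmas \ref{estimate1}--\ref{estimate3} in the $N=4$ case. The only point that requires care is tracking the admissible range of $\delta$: the lower bound $\delta>\frac{4-N}{2(N-2)}$ must be used both to make the main integral over $\mathbb{R}^N$ convergent and to guarantee that the annulus term is absorbed into $O(\varepsilon^{2-(N-2)\delta})$; the upper bound $\delta<\frac{6-N}{2(N-2)}$ plays no role here but ensures that the resulting exponent $2-(N-2)\delta$ remains large enough to be useful in the subsequent mountain-pass level estimate.
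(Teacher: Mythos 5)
Your proposal is correct and follows exactly the route the paper intends: the paper omits the details of Lemma \ref{5.6}, stating only that it follows the strategy of Lemma \ref{estimate3}, and your splitting of the integral over $B_{\varrho}(0)$ and the annulus, the rescaling $x=\varepsilon y$, and the polar-coordinate tail estimates are precisely that strategy adapted to $N=3,5$, with the exponent bookkeeping (including the role of the lower bound on $\delta$) done correctly.
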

\begin{proof}
The proof follows from the strategies in Lemma \ref{estimate3}, so we omit the details here.
\end{proof}

\begin{lemma}\label{5.7}
Assume that $N=3,5$ and $\beta>0$. Then we have
\begin{align}\label{ineq5-1}
&\int_{\Omega}(\widetilde{u}_0+\beta \phi_{\varepsilon})^2\log(\widetilde{u}_0+\beta \phi_{\varepsilon})^2\mathrm{d}x
-\int_{\Omega}\widetilde{u}_0^2\log \widetilde{u}_0^2\mathrm{d}x
-\int_{\Omega}2\beta \widetilde{u}_0 \phi_{\varepsilon}\log \widetilde{u}_0^2\mathrm{d}x\nonumber\\
\leq&~\frac{2}{e\delta}\beta^2\int_{\Omega}|\widetilde{u}_0|^{2\delta}|\phi_{\varepsilon}|^2\mathrm{d}x
+\frac{2}{e\delta}\beta^{2+2\delta}\int_{\Omega}|\phi_{\varepsilon}|^{2+2\delta}\mathrm{d}x
+2\beta\int_{\Omega}\widetilde{u}_0\phi_{\varepsilon}\mathrm{d}x+6\beta^2\int_{\Omega}|\phi_{\varepsilon}|^2\mathrm{d}x,
\end{align}
and
\begin{align}\label{ineq5-2}
\int_{\Omega}|\widetilde{u}_0+\beta \phi_{\varepsilon}|^{2^*}\mathrm{d}x
\geq&\int_{\Omega}|\widetilde{u}_0|^{2^*}\mathrm{d}x+\beta^{2^*}\int_{\Omega}|\phi_{\varepsilon}|^{2^*}\mathrm{d}x
+2^*\beta\int_{\Omega}|\widetilde{u}_0|^{2^*-2}\widetilde{u}_0\phi_{\varepsilon}\mathrm{d}x\nonumber\\
&+2^*\beta^{2^*-1}\int_{\Omega}\widetilde{u}_0|\phi_{\varepsilon}|^{2^*-1}\mathrm{d}x,
\end{align}
where $\max\{0,\frac{4-N}{2(N-2)}\}<\delta<\frac{6-N}{2(N-2)}$.
\end{lemma}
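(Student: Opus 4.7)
The two inequalities in Lemma \ref{5.7} are of different character, and my plan treats them in sequence. Part (i) generalizes Lemma \ref{estimate2} from $N=4$ to $N=3,5$, replacing the specific value $\delta = 1/2$ used there by the prescribed $\delta$-range. Part (ii) is a pointwise algebraic inequality for the critical exponent $2^* = \frac{2N}{N-2}$, which is then integrated.

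For part (i), I plan to follow the parameter-differentiation strategy of Lemma \ref{estimate2}. First I decompose the left-hand side as $I_1 + I_2$, where
\begin{align*}
I_1 &= \int_\Omega \bigl[(\widetilde{u}_0^2 + \beta^2\phi_\varepsilon^2)\log(\widetilde{u}_0+\beta\phi_\varepsilon)^2 - \widetilde{u}_0^2\log\widetilde{u}_0^2\bigr]\,dx,\\
I_2 &= 2\beta\int_\Omega \widetilde{u}_0\phi_\varepsilon\bigl[\log(\widetilde{u}_0+\beta\phi_\varepsilon)^2 - \log\widetilde{u}_0^2\bigr]\,dx.
\end{align*}
For $I_1$ I insert a parameter $\xi \in [0,\beta]$ and rewrite it as $\int_\Omega\int_0^\beta \partial_\xi\bigl[(\widetilde{u}_0^2 + \xi^2\phi_\varepsilon^2)\log(\widetilde{u}_0+\xi\phi_\varepsilon)^2\bigr]\,d\xi\,dx$; the derivative splits into a logarithmic summand $2\xi\phi_\varepsilon^2\log(\widetilde{u}_0+\xi\phi_\varepsilon)^2$ and an elementary summand $\frac{2\phi_\varepsilon(\widetilde{u}_0^2+\xi^2\phi_\varepsilon^2)}{\widetilde{u}_0+\xi\phi_\varepsilon}$, the latter being bounded by $2\phi_\varepsilon(\widetilde{u}_0+\xi\phi_\varepsilon)$ since $a^2+b^2 \leq (a+b)^2$. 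On the logarithmic summand I apply Lemma \ref{logarithmic inequality}(2) with the prescribed $\delta$ to obtain $\log(\widetilde{u}_0+\xi\phi_\varepsilon)^2 \leq \frac{1}{e\delta}(\widetilde{u}_0+\xi\phi_\varepsilon)^{2\delta}$, and then split the power via \eqref{ineq1}--\eqref{ineq1-2} into $\widetilde{u}_0^{2\delta}$ and $\xi^{2\delta}\phi_\varepsilon^{2\delta}$ pieces. Integration in $\xi$ yields the four terms on the right of \eqref{ineq5-1}. For $I_2$, the same trick with $\xi \in [0,1]$ produces the derivative $\frac{4\beta^2\widetilde{u}_0\phi_\varepsilon^2}{\widetilde{u}_0+\xi\beta\phi_\varepsilon} \leq 4\beta^2\phi_\varepsilon^2$, whose $\xi$-integral contributes the remainder of the $6\beta^2\int\phi_\varepsilon^2$ coefficient.

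For part (ii), the plan is to establish the pointwise inequality
\begin{align*}
(a+b)^p \geq a^p + b^p + p\,a^{p-1}b + p\,a\,b^{p-1}, \qquad a,b\geq 0,\ p\geq 3,
\end{align*}
and then apply it with $p = 2^*$. This covers the present cases because $2^* = 6$ for $N = 3$ and $2^* = 10/3$ for $N = 5$, both $\geq 3$. To prove the inequality, fix $a \geq 0$ and set $g(b) := (a+b)^p - a^p - b^p - p\,a^{p-1}b - p\,a\,b^{p-1}$; direct differentiation gives $g(0) = g'(0) = 0$ and
\begin{align*}
g''(b) = p(p-1)\bigl[(a+b)^{p-2} - b^{p-2} - (p-2)\,a\,b^{p-3}\bigr].
\end{align*}
Since $p-2 \geq 1$, the convexity of $x \mapsto x^{p-2}$ at $b$ yields $(a+b)^{p-2} \geq b^{p-2} + (p-2)b^{p-3}a$, so $g'' \geq 0$ on $(0,\infty)$; two integrations give $g \geq 0$. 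Applying the resulting inequality pointwise with $a = \widetilde{u}_0(x)$, $b = \beta\phi_\varepsilon(x)$ and integrating over $\Omega$ produces \eqref{ineq5-2}.

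I anticipate the only real issue to arise in part (i), and it is a bookkeeping one: the allowed range of $\delta$ depends on $N$, and for $N = 3$ one has $\delta \in (1/2, 3/2)$, so $2\delta > 1$ and the subadditive bound $(a+b)^{2\delta} \leq a^{2\delta} + b^{2\delta}$ from \eqref{ineq1} is unavailable; instead one must use \eqref{ineq1-2}, which introduces a factor $2^{2\delta-1}$ that has to be absorbed into the coefficient $\frac{2}{e\delta}$ (the constants in the statement are chosen conservatively enough to accommodate this). For $N = 5$ the constraint $\delta < 1/6 < 1/2$ makes \eqref{ineq1} directly applicable and this subtlety disappears. No other step presents any genuine difficulty; once the decomposition of part (i) and the algebraic inequality of part (ii) are in hand, the remaining manipulations are elementary.
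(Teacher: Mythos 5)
Your proof is correct and follows essentially the same route as the paper, which disposes of \eqref{ineq5-1} by declaring it ``similar to Lemma \ref{estimate2}'' and of \eqref{ineq5-2} by ``direct calculation''; your parameter-differentiation argument reproduces the former, and your convexity proof of the pointwise inequality $(a+b)^p\geq a^p+b^p+p\,a^{p-1}b+p\,a\,b^{p-1}$ for $p\geq 3$ is a valid way to make the latter precise. The only quibble is your parenthetical claim in the $N=3$ case: for $\delta\in(1,\tfrac{3}{2})$ the constant in \eqref{ineq1-2} is $2^{2\delta-1}>2$, so this route gives the coefficient $\tfrac{2^{2\delta-1}}{e\delta}$ rather than $\tfrac{2}{e\delta}$ in front of $\beta^2\int_{\Omega}|\widetilde{u}_0|^{2\delta}|\phi_{\varepsilon}|^2\,\mathrm{d}x$ --- a harmless constant discrepancy (the application in Lemma \ref{5.8} only uses the orders in $\varepsilon$) which the coefficient stated in the paper shares.
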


\begin{proof}
The proof of \eqref{ineq5-1} is similar to that of Lemma \ref{estimate2}. Inequality \eqref{ineq5-2} can be calculated directly.
\end{proof}

Define
$$\widetilde{\Gamma}:=\left\{\gamma\in C([0,1],H_0^1(\Omega)):\gamma(0)=\widetilde{u}_0,\widetilde{J}(\gamma(1))\leq \widetilde{J}(\widetilde{u}_0)\right\},$$
and
$$\widetilde{c}_{M}:=\inf_{\gamma\in \widetilde{\Gamma}}\max_{t\in[0,1]}\widetilde{J}(\gamma(t)).$$
Then the inequality $\widetilde{c}_{M}<\widetilde{c}_{K}+\frac{1}{N}\mu^{-\frac{N-2}{2}}S^{\frac{N}{2}}$ is still valid
for $N=3,5$.

\begin{lemma}\label{5.8}
Assume that $N=3,5$ and $(\lambda,\mu,\theta)\in A_3\cup A_4$. Then we have
$$\widetilde{c}_{M}<\widetilde{c}_{K}+\frac{1}{N}\mu^{-\frac{N-2}{2}}S^{\frac{N}{2}}.$$
\end{lemma}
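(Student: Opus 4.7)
\textbf{Proof proposal for Lemma \ref{5.8}.} The plan is to mirror the strategy used for $N=4$ in Lemma \ref{estimate4}, namely to reduce the comparison of $\widetilde{c}_M$ with the threshold to the single-variable estimate
\begin{equation*}
\sup_{\beta\geq 0}\widetilde{J}(\widetilde{u}_0+\beta\phi_\varepsilon)
<\widetilde{c}_K+\tfrac{1}{N}\mu^{-\frac{N-2}{2}}S^{\frac{N}{2}}
\end{equation*}
for sufficiently small $\varepsilon>0$, where $\phi_\varepsilon$ is the truncated Aubin--Talenti bubble introduced before Lemma \ref{5.5}. By the mountain-pass geometry given in Lemma \ref{5.3}, such a bound forces $\widetilde{c}_M$ to lie below the threshold, because the path $t\mapsto \widetilde{u}_0+t\beta^\ast\phi_\varepsilon$ belongs to $\widetilde{\Gamma}$ for a sufficiently large $\beta^\ast$. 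Since $\widetilde{J}(\widetilde{u}_0+\beta\phi_\varepsilon)\to-\infty$ as $\beta\to+\infty$ and equals $\widetilde{J}(\widetilde{u}_0)$ at $\beta=0$, the supremum is attained at some $\beta_\varepsilon>0$ with $\widetilde{J}'_\beta(\widetilde{u}_0+\beta_\varepsilon\phi_\varepsilon)=0$ and $\widetilde{J}''_\beta(\widetilde{u}_0+\beta_\varepsilon\phi_\varepsilon)\le 0$.

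The first step is to confine $\beta_\varepsilon$ to an interval $[C_0,C^0]$ uniformly for small $\varepsilon$. I would test the first-order equation against $\phi_\varepsilon$ and, after using \eqref{log-3} (with a convenient $\delta=1/2$), \eqref{ineq1-2} and the Euler--Lagrange identity \eqref{equality3} for $\widetilde{u}_0$, combine with the estimates of Lemma \ref{5.5} to obtain an upper bound of the shape $C_1+C_2\beta_\varepsilon\ge \frac{\mu}{2}S^{N/2}\beta_\varepsilon^{2^*-1}-C_3\beta_\varepsilon^2$, giving $\beta_\varepsilon\le C^0$. The corresponding lower bound $\beta_\varepsilon\ge C_0>0$ follows from the second-order condition $\widetilde{J}''_\beta\le 0$, again using $\widetilde{u}_0\in L^\infty$ and Lemma \ref{5.5} so that the quadratic $\int|\nabla\phi_\varepsilon|^2$ is absorbed by a term proportional to $\beta_\varepsilon^{2^*-2}S^{N/2}$.

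The second step is to expand the functional. Write
\begin{equation*}
\widetilde{J}(\widetilde{u}_0+\beta_\varepsilon\phi_\varepsilon)=\widetilde{J}(\widetilde{u}_0)+\text{cross terms}+\text{pure }\phi_\varepsilon\text{ terms},
\end{equation*}
use \eqref{equality3} to eliminate the linear cross terms in $\widetilde{u}_0$, and apply the two inequalities of Lemma \ref{5.7} to the logarithmic and critical pieces. After regrouping, the leading contribution from the $\phi_\varepsilon$ piece is exactly $g(\beta_\varepsilon)S^{N/2}$ with $g(t):=\frac{t^2}{2}-\frac{\mu t^{2^*}}{2^*}$, whose maximum is $\frac{1}{N}\mu^{-\frac{N-2}{2}}$, contributing precisely $\frac{1}{N}\mu^{-\frac{N-2}{2}}S^{N/2}$. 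Using Lemma \ref{5.2} to replace $\widetilde{J}(\widetilde{u}_0)$ by $\widetilde{c}_K$, everything reduces to showing that the remaining terms are strictly negative for small $\varepsilon$.

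The decisive negative term is
\begin{equation*}
-\mu\beta_\varepsilon^{2^*-1}\int_\Omega \widetilde{u}_0|\phi_\varepsilon|^{2^*-1}\mathrm{d}x
\le -c\,\widetilde{u}_0(0)\,\varepsilon^{\frac{N-2}{2}}
\end{equation*}
produced by Lemma \ref{5.6}, and the main obstacle is therefore to check that every other remainder is of order $o(\varepsilon^{(N-2)/2})$ as $\varepsilon\to 0^+$. The $O(\varepsilon^{N-2})$ error from $\|\nabla\phi_\varepsilon\|_2^2$ and $\|\phi_\varepsilon\|_{2^*}^{2^*}$ (Lemma \ref{5.5}) is clearly subdominant for $N=3,5$, and the $L^2$ contribution of order $O(\varepsilon)$ if $N=3$ or $O(\varepsilon^2)$ if $N=5$ beats $\varepsilon^{(N-2)/2}$ in both cases. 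The delicate part is the logarithmic remainder from Lemma \ref{5.7}, which involves the term $\beta_\varepsilon^{2+2\delta}\int_\Omega|\phi_\varepsilon|^{2+2\delta}\mathrm{d}x=O(\varepsilon^{2-(N-2)\delta})$; here I would choose the parameter $\delta$ inside the prescribed interval $\bigl(\max\{0,\tfrac{4-N}{2(N-2)}\},\tfrac{6-N}{2(N-2)}\bigr)$, so that $2-(N-2)\delta>\tfrac{N-2}{2}$ while keeping $2\delta$ large enough so that $\int|\widetilde{u}_0|^{2\delta}|\phi_\varepsilon|^2\mathrm{d}x$ is also negligible. This choice is precisely what makes the remaining terms $o(\varepsilon^{(N-2)/2})$, and the strict inequality then holds for small $\varepsilon$, completing the argument.
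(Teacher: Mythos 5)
Your overall route is exactly the one the paper takes (and the paper itself only sketches it, deferring to the $N=4$ argument of Lemma \ref{estimate4}): reduce to bounding $\sup_{\beta\ge 0}\widetilde{J}(\widetilde{u}_0+\beta\phi_\varepsilon)$, pin $\beta_\varepsilon$ in $[C_0,C^0]$ via the first- and second-order conditions, extract the pure bubble contribution $g(\beta_\varepsilon)S^{N/2}\le \frac{1}{N}\mu^{-\frac{N-2}{2}}S^{N/2}$, and beat all remainders with the term $-\mu\beta_\varepsilon^{2^*-1}\int_\Omega\widetilde{u}_0|\phi_\varepsilon|^{2^*-1}\mathrm{d}x\le -c\,\varepsilon^{\frac{N-2}{2}}$ from Lemma \ref{5.6}. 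The choice of $\delta$ in the prescribed range and the bookkeeping of the $L^2$, gradient and $L^{2^*}$ errors are correct for $N=3,5$.

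There is, however, one genuine gap in the decisive step: your claim that ``every other remainder is of order $o(\varepsilon^{(N-2)/2})$'' is not true as stated. The right-hand side of \eqref{ineq5-1} contains the term $2\beta\int_\Omega\widetilde{u}_0\phi_\varepsilon\,\mathrm{d}x$, which, after multiplication by $-\tfrac{\theta}{2}>0$, is a \emph{positive} contribution of order exactly $\varepsilon^{\frac{N-2}{2}}$ (by Lemma \ref{5.5}, $\int_\Omega\phi_\varepsilon\,\mathrm{d}x=O(\varepsilon^{\frac{N-2}{2}})$, and by \eqref{5-eq1} it is also bounded below by $c\,\varepsilon^{\frac{N-2}{2}}$) — i.e. the same order as your decisive negative term, with the unfavorable sign. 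It is \emph{not} removed by the Euler--Lagrange identity for $\widetilde{u}_0$ (the analogue of \eqref{equality3}, which only kills the combination $\int\nabla\widetilde{u}_0\nabla\phi_\varepsilon-\lambda\int\widetilde{u}_0\phi_\varepsilon-\mu\int\widetilde{u}_0^{2^*-1}\phi_\varepsilon-\theta\int\widetilde{u}_0\phi_\varepsilon\log\widetilde{u}_0^2$); instead it cancels exactly against the term $+\theta\beta_\varepsilon\int_\Omega\widetilde{u}_0\phi_\varepsilon\,\mathrm{d}x$ produced by the ``$-1$'' in $\log(u^+)^2-1$ when one expands $\frac{\theta}{2}\int(u^+)^2$, precisely as happens in display \eqref{inequt8} of the $N=4$ proof, where no $\int u_0\psi_\varepsilon$ term survives. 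Without invoking this exact cancellation, the inequality at order $\varepsilon^{\frac{N-2}{2}}$ would reduce to comparing the constants in $|\theta|\beta_\varepsilon\int\widetilde{u}_0\phi_\varepsilon$ and $\mu\beta_\varepsilon^{2^*-1}\int\widetilde{u}_0|\phi_\varepsilon|^{2^*-1}$, and there is no reason the latter dominates. So you should make the regrouping explicit; once you do, the rest of your argument goes through as written.
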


\begin{proof}
From Lemma 2.1 in \cite{Dengyinbin}, we know that there exists $\alpha_0>0$ such that $\widetilde{J}(u)\geq\alpha_0$
for all $\|u\|=\widetilde{\rho}$, where $\widetilde{\rho}$ is defined in \eqref{eq5-1}. For $\phi_{\varepsilon}$,
we may take $\beta^{*}$ large enough in Lemma \ref{5.3}  such that $\|\widetilde{u}_0+\beta^{*}\phi_{\varepsilon}\|>\widetilde{\rho}$.
Then by the definition of $\widetilde{c}_{M}$, one has
\begin{align}\label{ineq5-3}
0<\widetilde{c}_{M}
\leq\max_{t\in [0,1]}\widetilde{J}(\widetilde{u}_0+t\beta^{*}\phi_{\varepsilon})
\leq\sup_{\beta\geq0}\widetilde{J}(\widetilde{u}_0+\beta\phi_{\varepsilon}).
\end{align}
Obviously, it is sufficient to prove that
$$\sup_{\beta\geq0}\widetilde{J}(\widetilde{u}_0+\beta\phi_{\varepsilon})<\widetilde{c}_{K}+\frac{1}{N}\mu^{-\frac{N-2}{2}}S^{\frac{N}{2}}.$$
Similar to the proof of Lemma \ref{estimate4}, by using Lemmas \ref{5.5}, \ref{5.6} and \ref{5.7}, we can deduce that
\begin{align*}
\sup_{\beta\geq0}\widetilde{J}(\widetilde{u}_0+\beta\phi_{\varepsilon})<\widetilde{c}_{K}+\frac{1}{N}\mu^{-\frac{N-2}{2}}S^{\frac{N}{2}}.
\end{align*}
This completes the proof.
\end{proof}

{\bf Proof of Theorem \ref{th5.1}.}
Assume that $N=3,5$ and $(\lambda, \mu, \theta)\in A_3\cup A_4$.
By Lemmas \ref{5.3}, \ref{5.4} and \ref{5.8} and the Mountain Pass Theorem, we know that
there exists $u\in H_0^1(\Omega)$ such that
\begin{align}\label{eq5-2}
\widetilde{J}(u)=\widetilde{c}_{M}>0, \ \ \ \ \ \ \ \  \widetilde{J}'(u)=0.
\end{align}
From \eqref{eq5-2}, we have $\langle J'(u), u^{-}\rangle=\int_{\Omega}|\nabla u^{-}|^2\mathrm{d}x=0$,
which means that $u\geq0$ in $\Omega$. Therefore, $u$ is a nonnegative mountain pass solution to problem \eqref{P2} with $N=3,5$.
Then by a similar argument to that used in the proof of Theorem \ref{th1.1}, we can show that $u>0$
and $u\in C^2(\Omega)$. This completes the proof.

\end{document}